
\documentclass[12pt]{article}
\usepackage{amsfonts}
\usepackage[T1]{fontenc}
\usepackage[utf8]{inputenc}
\usepackage[letterpaper]{geometry}
\usepackage{amsmath, mathrsfs}
\usepackage{graphicx}
\usepackage[active]{srcltx}
\setlength{\parskip}{\medskipamount}

\numberwithin{equation}{section}

\usepackage{amsmath}
\usepackage{graphicx}
\usepackage[active]{srcltx}
\usepackage{amssymb}
\usepackage{color}
\usepackage{amsmath}
\usepackage{graphicx}
\usepackage[active]{srcltx}
\include{srctex}

\setcounter{MaxMatrixCols}{10}
\newtheorem{theorem}{Theorem}[section]

\newtheorem{corollary}[theorem]{Corollary}

\newtheorem{definition}[theorem]{Definition}

\newtheorem{lemma}[theorem]{Lemma}

\newtheorem{proposition}[theorem]{Proposition}
\newtheorem{remark}[theorem]{Remark}

\newenvironment{proof}[1][Proof]{\textbf{#1.} }{\ \rule{0.5em}{0.5em}}

\def\E{\mathbb{E}}
\def\L{\mathscr{L}}
\def\R{\mathbb{R}}

\def\ch{\mathcal{H}}\def\bh{\mathbb{H}}
\def\e{\varepsilon}
\def\d{\delta}

\def\Var{\text{Var}}
\begin{document}

\title{Large deviations for functionals of some self-similar Gaussian processes}
\date{}
\author{Xiaoming Song}
\maketitle
\begin{abstract}
We prove  large deviation principles for $\int_0^t \gamma(X_s)ds$, where $X$ is a $d$-dimensional self-similar Gaussian process and $\gamma(x)$ takes the form of the Dirac delta function $\delta(x)$, $|x|^{-\beta}$ with $\beta\in (0,d)$, or $\prod_{i=1}^d |x_i|^{-\beta_i}$ with $\beta_i\in(0,1)$. In particular, large deviations are obtained for the functionals of $d$-dimensional fractional Brownian motion, sub-fractional Brownian motion and bi-fractional Brownian motion. As an application, the critical exponential integrability of the functionals is discussed. 
\vskip.2cm \noindent {\it Keywords:} Self-similar Gaussian process, fractional Brownian motion, sub-fractional Brownian motion, bi-fractional Brownian motion, reproducing kernel Hilbert space, local time, large deviation principles.

\vskip.2cm \noindent {\it Subject Classification: Primary 60G15, 60G18, 60G22, 60J55, 60F10. }

\end{abstract}

\section{Introduction}  

The large deviation principles for functionals of symmetric L\'evy stable  processes such as the (intersection) local time and  Riesz potentials of additive processes were studied in  \cite{chen07, bcr}, where the properties of  symmetric L\'evy stable  process (including the standard Brownian motion) such as self-similarity and independent increment property play a crucial role in the analysis. Later, exact forms of large deviations for (intersection) local times of fractional Brownian motion (fBm for short) and the Riemann-Liouville process were obtained in \cite{clrs}. The results in \cite{clrs} are surprising and are not a ``natural'' extension of  \cite{chen07, bcr}, in the sense that fBm and the Riemann-Liouville process are not Markovian and the techniques for L\'evy processes do not apply. 

Let $\gamma(x)$ be one of the following functions: the Dirac delta function $\delta(x)$, $|x|^{-\beta}$ with $\beta\in (0,d)$,  and $\prod_{i=1}^d |x_i|^{-\beta_i}$ with $\beta_i\in(0,1)$. Throughout the article, we use the convention that  $\beta=d$ if $\gamma(x)=\delta(x)$ and that $\beta=\sum_{i=1}^d \beta_i$ if $\gamma(x)=\prod_{i=1}^d |x_i|^{-\beta_i}$. Under this convention, the functional $\gamma$ has homogeneity, i.e., $\gamma(ax)=a^{-\beta}\gamma(x)$ for $a>0$ and $x\in \mathbb{R}^d$.

 This article concerns the large deviations  for $\int_0^t \gamma(X_s)ds$, where $X$ is a $d$-dimensional  self-similar Gaussian process satisfying some conditions.  In particular, the large deviations for the functionals of fractional Brownian motion  $B^H$ (fBm for short) with $H\in(0,1)$  and $H\beta<1$, sub-fractional Brownian motion $S^H$ (sub-fBm for short) with $H<\frac12$ and $H\beta<1$, and  bi-fractional Brownian motion $Z^{H, K}$ (bi-fBm for short) with $H\in(0,1), K\in (0,1]$ and  $HK\beta<1$. 
 
 Instead of carrying out  a direct analysis for the functionals of fBm, sub-fBm and bi-fBm, we first obtain the large deviations for the Riemann-Liouville process $\int_0^t (t-s)^{H-\frac12}dW_s$, where $W$ is $d$-dimensional standard Brownian motion. In light of Lemma \ref{lemma10},  the large deviation principle for the functional of the Riemann-Liouville process is reduced to proving the existence of the limit for the log moments of the functional, for which it suffices to show the sub-additivity (see Propositions \ref{prop-sub-additive} and \ref{prop-3.2}).  After we obtain the results for the functionals of the Riemann-Liouville process, we study the large deviations for the functionals of fBm, sub-fBm and bi-fBm  
  by comparing them with the functionals of the Riemann-Liouville process. The comparison strategy was initially developed in \cite{clrs}, and we briefly interpret the two key ingredients of the idea below.

Firstly, we observe that, for general Gaussian processes $X$ and $Y$ which both possess certain self-similarity, if $Y=X+\eta$ such that $X$ and $\eta$ are independent, and $\eta$ belongs to the Cameron-Martin space of $X$ almost surely, then the large deviations of $X$ and $Y$, if one of them exists, both exist and coincide with each other (see Proposition \ref{prop-ld} for details). The crucial condition here is that $\eta$ belongs to the Cameron-Martin space of $X$, which yields that, conditioned on $\eta$, the distributions of $X$ and $Y$ are equivalent. An heuristic explanation for the coincidence of the large deviations for the functionals of $X$ and $Y$ is that,  $\eta$ is  ``regular'' enough in comparison with $X$, and thus the perturbation of $\eta$ is just negligible. 
 
The second key ingredient in the comparison strategy is to show that the decompositions for fBm, sub-fBm (\cite{ct}) and bi-fBm (\cite{ln})   satisfy the conditions in Proposition \ref{prop-ld}, for which one needs to characterize the Cameron-Martin spaces for fBm, sub-fBm and bi-fBm (see Section \ref{section4.1}).

This article is organized as follows. In Section 2,  the comparison principle for the large deviations of functionals of  self-similar Gaussian processes is developed in a general context. 
Large deviations for the functionals of the Riemann-Liouville process are obtained in section 3.   Finally, section 4 is devoted to the study of large deviations for functionals of fBm, sub-fBm and bi-fBm.

\section{Large deviations by comparison}\label{section2}
 Suppose that $X$ and $Y$ are two $d$-dimensional self-similar Gaussian processes such that $\{X_{at}, t\ge 0\}\overset{d}{=} \{a^\alpha X_t, t\ge 0\}$ and  $\{Y_{at}, t\ge 0\}\overset{d}{=} \{a^\alpha Y_t, t\ge 0\}$ with $\alpha>0$ and $a>0$, and that $Y\overset d= X+\eta$ where  $\eta$ is a Gaussian process which is independent of $X$ and belongs to the reproducing kernel Hilbert space of $X$ almost surely. Let $\gamma(x)$ take the forms of the Dirac delta function $\delta(x)$ with $\alpha d<1$, $\prod_{i=1}^d|x_i|^{-\beta_i}$ with $\beta_i\in(0,1)$ and $\alpha\sum_{i=1}^d \beta_i<1$, or $|x|^{-\beta}$ with $\beta\in(0,d)$ and $\alpha \beta<1$.  
 
 The major goal of  this section is to prove the following equality under some conditions,  $$\lim_{t\to\infty}\frac1t\log \E\exp\left(\int_0^t \gamma(X_s)ds\right)=\lim_{t\to\infty}\frac1t\log \E\exp\left(\int_0^t \gamma(Y_s)ds\right).$$ 
 This result is useful, for instance, to derive the large deviations for fBm, sub-fBm and  bi-fBm (see Section \ref{section4}). 
\subsection{Reproducing kernel Hilbert spaces}
In this subsection, we summarize some preliminaries on reproducing kernel Hilbert spaces associated with Gaussian processes. We refer readers to \cite{Janson} for more details. 

In a probability space $(\Omega, \mathcal F, P)$, consider a one-dimensional centered Gaussian process $X=\{X_t, 0\le t\le T\}$ with covariance function 
$$R(s,t)=\E[X_sX_t],\ 0\leq s\leq t\leq T. $$

The {\it reproducing kernel Hilbert space (RKHS)} associated with the Gaussian process $X$,  denoted by $\bh(X)$, is the completion of the linear span of the functions $\sum_{i=1}^n a_i R(s_i, \cdot)$ with $n\in \mathbb N, a_i\in\R, s_i\in [0,T]$, $i=1,\dots, n$, under the norm induced by the inner product
$$\left\langle R(s, \cdot),~ R(t, \cdot)\right\rangle_{\bh(X)}=  R(s, t). $$
Note that the RKHS is also referred to as the {\it Cameron-Martin space} (\cite[Theorem 8.15]{Janson}), and in this article we do not distinguish these two terminologies. 

As a comparison, we also recall the space of integrands $\dot f$ for Wiener integrals with respect to $X$, denoted  by $\ch(X)$, which is defined as the completion of the linear span of the simple functions $\sum_{i=1}^n a_i \mathbf{1}_{(s_i, t_i]}$ under the norm induced by the inner product
$$\left\langle \mathbf{1}_{(0,s]}, \mathbf{1}_{(0,t]}\right\rangle_{\ch(X)}= R(s,t).$$
 Denote by $X(\dot f)$ the Wiener integral for $\dot f\in \ch(X)$. The collection of these Wiener integrals is the {\it first Wiener chaos} $\mathbf H_1$ of $X$ (see, e.g., \cite{nualart06}). Then $\E[X(\dot f)X(\dot g)]=\langle \dot f, \dot g\rangle_{\ch(X)}$, for $\dot f,\dot g\in \ch(X)$. Furthermore, setting $f(t)=\E[X(\dot f) X_t]$, we have $f\in \bh(X)$, and $\langle f, g\rangle_{\bh(X)}=\langle \dot f, \dot g\rangle_{\ch(X)}.$  Therefore,  the RKHS $\bh(X)$, the space   $\ch(X)$ of  integrands of Wiener integrals,  and the first Wiener chaos $\mathbf H_1$ of $X$ are isometric to each other.     For example,  when $X=W$ is a Brownian motion, $\ch(W)=L^2[0,T]$, $\bh(W)=\{\int_0^\cdot \dot f(s) ds, \dot f(s)\in L^2[0,T]\}$  and $\mathbf H_1=\{W(\dot f)=\int_0^T \dot f(s) dW_s, \dot f\in L^2[0,T]\}$.
 
 One  important feature of the RKHS  is the following. For a function $h:[0,T]\to \R$, the laws of $X+h$ and  $X$  are mutually absolutely continuous (resp. mutually singular) if $h\in \bh(X)$ (resp. if $h\notin \bh(X)$), see, e.g., \cite[Theorem 14.17]{Janson}. Moreover, for $h\in \bh(X)$, by the Cameron-Martin theorem,  the measure $\tilde P$ defined by
 $$ \frac{d\tilde P}{dP}=\exp\left(-X(\dot h)-\frac12\|h\|_{\bh(X)}^2\right)$$
is a probability measure, under which $X+h$ has the same distribution as $X$ under $P$.

\subsection{Preliminaries on large deviation principles}
In this subsection, let $L=\{L_t, t\geq 0\}$ be a stochastic process with non-negative values. We will recall some results on the large deviation principle for the process $L$.
\begin{definition}\label{def-ldp}
A function $I:\,\mathbb{R^+}\to [0,\infty]$ is called a rate function on $\mathbb{R^+}$, if for each $M<\infty$ the level set $\{x\in\mathbb{R}^+:\,I(x)\leq M\}$ is a closed subset of $\mathbb{R}^+$. If the level set $\{x\in\mathbb{R}^+:\,I(x)\leq M\}$ is compact for any $M<\infty$, then $I(\cdot)$ is said to be a good rate function. For any $A\in\mathcal{B}(\mathbb{R}^+)$, we define $I(A)=\inf_{x\in A}I(x)$.
\end{definition}
\begin{definition}\label{ldp}
Let $I(\cdot)$ be a rate function on $\mathbb{R}^+$, and let $\{b(t),t\geq 0\}$ be  a sequence of positive real numbers such that $\lim\limits_{t\to\infty}b(t)=\infty$. The stochastic process $L$ is said to satisfy the large deviation principle with speed $\{b(t)\}$ and rate function $I(\cdot)$ if the following two conditions hold:
\[
\limsup\limits_{t\to \infty}\frac1{b(t)}\log\mathbb{P}(L_t\in F)\leq -\inf\limits_{\lambda\in F}I(\lambda), \  \mbox{ for any closed set } F\subseteq \mathbb{R}^+,
\]
and
\[
\liminf\limits_{t\to \infty}\frac1{b(t)}\log\mathbb{P}(L_t\in G)\geq -\inf\limits_{\lambda\in G}I(\lambda), \  \mbox{ for any open set } G \subseteq \mathbb{R}^+.
\]
\end{definition}
The following result shows that under some mild conditions on the rate function $I(\cdot)$, the large deviation principle defined above is equivalent to the asymptotic behavior of tail properties (see \cite[Theorem 1.2.1]{chen}).
\begin{theorem}\label{equiv}
Suppose that the rate function $I(\cdot)$ is strictly increasing and continuous on $\mathbb{R}^+$. The following two statements are equivalent:
\begin{itemize}
\item[(a)] The large deviation principle given in Definition \ref{ldp} holds.
\item[(b)] For any $\lambda>0$, 
\begin{equation}\label{log-prob}
\lim\limits_{t\to \infty}\frac1{b(t)}\log\mathbb{P}(L_t\geq \lambda)=-I(\lambda).
\end{equation}
\end{itemize}
\end{theorem}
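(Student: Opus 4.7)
The plan is to exploit the fact that, because $I$ is strictly increasing and continuous, the infimum of $I$ over a set $A\subseteq\mathbb{R}^+$ is controlled by $I(\inf A)$, so the probabilities $P(L_t\in A)$ for general Borel sets can be sandwiched by tail probabilities $P(L_t\ge\lambda)$. This turns the two-sided LDP bounds into a single-parameter statement about tails and vice versa.

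For the direction (a)$\Rightarrow$(b), I would apply the upper bound of Definition~\ref{ldp} to the closed set $F=[\lambda,\infty)$, which gives $\limsup_{t\to\infty}\frac1{b(t)}\log P(L_t\ge\lambda)\le -\inf_{x\ge\lambda}I(x)=-I(\lambda)$, the last equality being strict monotonicity of $I$. For the matching lower bound, apply the lower bound to the open set $G=(\lambda-\varepsilon,\infty)$ for small $\varepsilon>0$, use $P(L_t\in G)\ge P(L_t\ge\lambda)$ trivially wait, actually we need a slightly different argument: apply the lower bound to $G=(\lambda-\varepsilon,\infty)$ to obtain $\liminf \frac{1}{b(t)}\log P(L_t> \lambda-\varepsilon)\ge -I(\lambda-\varepsilon)$, then let $\varepsilon\downarrow 0$ and use continuity of $I$, combined with $P(L_t\ge\lambda)\ge P(L_t>\lambda)\ge P(L_t>\lambda-\varepsilon)-P(L_t\in(\lambda-\varepsilon,\lambda])$ if necessary, or more cleanly apply the lower bound to $(\lambda,\infty)$ and use $P(L_t\ge\lambda)\ge P(L_t>\lambda)$ together with continuity $\inf_{x>\lambda}I(x)=I(\lambda)$.

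For (b)$\Rightarrow$(a), the upper bound is the easy direction. Given a nonempty closed set $F\subseteq\mathbb{R}^+$, let $\lambda_0=\inf F$, which lies in $F$, and by strict monotonicity $\inf_F I=I(\lambda_0)$; since $P(L_t\in F)\le P(L_t\ge\lambda_0)$, (b) gives the required upper bound. For the open set lower bound, fix $\mu\in G$ and pick $\mu'>\mu$ with $[\mu,\mu')\subseteq G$, which is possible because $G$ is open. Then
\[
P(L_t\in G)\ge P(L_t\ge\mu)-P(L_t\ge\mu').
\]
Using (b) and the strict inequality $I(\mu)<I(\mu')$, for any small $\varepsilon>0$ and all large $t$, $P(L_t\ge\mu)\ge e^{-b(t)(I(\mu)+\varepsilon)}$ while $P(L_t\ge\mu')\le e^{-b(t)(I(\mu')-\varepsilon)}$, so the second term is negligible compared to the first, giving $\liminf\frac1{b(t)}\log P(L_t\in G)\ge -I(\mu)-\varepsilon$. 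Letting $\varepsilon\to 0$ and taking infimum over $\mu\in G$ yields the lower bound.

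The main obstacle is the lower bound in (b)$\Rightarrow$(a): one needs strict monotonicity of $I$ to ensure that the subtraction $P(L_t\ge\mu)-P(L_t\ge\mu')$ does not cancel the leading exponential rate, and one needs continuity of $I$ to be able to approximate $\inf_G I$ by values $I(\mu)$ with $\mu\in G$. Both hypotheses are used in an essential way. Everything else reduces to manipulating closed and open sets on $\mathbb{R}^+$ via their infima, where the monotonicity of $I$ makes the variational principles $\inf_F I=I(\inf F)$ and $\inf_G I=I(\inf G)$ (as a limit) trivial.
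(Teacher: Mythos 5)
Your argument is correct in substance, but there is nothing in the paper to compare it against: the paper states this result as an import from \cite[Theorem 1.2.1]{chen} and gives no proof. What you supply is the standard direct argument, and it is the right one. For (a)$\Rightarrow$(b) you test the LDP on the closed set $[\lambda,\infty)$ and the open set $(\lambda,\infty)$, using strict monotonicity to get $\inf_{x\ge\lambda}I(x)=I(\lambda)$ and continuity to get $\inf_{x>\lambda}I(x)=I(\lambda)$; for (b)$\Rightarrow$(a) you reduce a closed set to the tail at its infimum, and for the open lower bound you use $\mathbb{P}(L_t\in G)\ge \mathbb{P}(L_t\ge\mu)-\mathbb{P}(L_t\ge\mu')$ with the subtracted term exponentially negligible because $I(\mu)<I(\mu')$. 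Both hypotheses are used exactly where you say they are. One loose end worth fixing: statement (b) is asserted only for $\lambda>0$, so your closed-set upper bound via ``$\mathbb{P}(L_t\in F)\le \mathbb{P}(L_t\ge\inf F)$'' does not follow from (b) when $\inf F=0$. In that case the claimed bound $\limsup_{t\to\infty}\frac1{b(t)}\log\mathbb{P}(L_t\in F)\le -I(0)$ requires $I(0)=0$, which is not among the stated hypotheses; it is forced by (a) (take $F=\mathbb{R}^+$) and holds in all of the paper's applications, where $I(\lambda)=\sup_{\theta>0}\{\theta\lambda-\Lambda(\theta)\}$ with $\Lambda\ge\Lambda(0)=0$ and $\Lambda'(0^+)=0$, but for a clean self-contained proof of (b)$\Rightarrow$(a) you should either add $I(0)=0$ as a (harmless) normalization or treat the case $0\in F$ separately. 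The rest of the write-up only needs cosmetic cleanup (the false start in the (a)$\Rightarrow$(b) lower bound should be deleted in favor of the ``apply the lower bound to $(\lambda,\infty)$'' version you settle on).
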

\begin{definition}
A convex function $\Lambda(\theta): \mathbb{R}^+\to [0,\infty]$ is said to be essentially smooth on $\mathbb{R}^+$, if
\begin{itemize}
\item[(1)] there is a $\theta_0>0$ such that $\Lambda(\theta)<\infty$ for every $\theta\in [0,\theta_0]$.
\item[(2)] the function $\Lambda(\cdot)$ is differentiable in the interior $D_\Lambda^o=(0,a)$ ($0<a\leq \infty$) of the domain $D_\Lambda=\{\theta\in\mathbb{R}^+: \Lambda(\theta)<\infty\}$.
\item[(3)] the function $\Lambda(\cdot)$ is steep at the right end of the domain and is flat at the left end of the domain, i.e.,
\[
\lim\limits_{\theta\to a^-}\Lambda'(\theta)=\infty \ \mbox{ and }\ \Lambda'(0^+)=\lim\limits_{\theta\to 0^+}\frac{\Lambda(\theta)-\Lambda(0)}{\theta}=0.
\]
\end{itemize}
\end{definition}

The following result appeared in \cite[Theorem 1.2.4]{chen} is a version of the G\"{a}rtner-Ellis large deviation.
\begin{theorem}\label{thm-GE}
Assume that for all $\theta\geq 0$, the limit
\[
\Lambda(\theta)=\lim\limits \frac{1}{b(t)}\log\E\exp\{\theta b(t)L_t\}
\]
exists as an extended real number, and that the function $\Lambda(\cdot)$ is essentially smooth on $\mathbb{R}^+$. Then, the function 
\[
I(\lambda)=\sup\limits_{\theta>0}\left\{\theta\lambda-\Lambda(\theta)\right\}, \ \lambda\geq 0
\]
is strictly increasing and continuous on $\mathbb{R}^+$. Moreover, the large deviation principle in Definition \ref{ldp} and equation \eqref{log-prob} hold and they are equivalent.
\end{theorem}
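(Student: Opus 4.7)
The plan is to invoke Theorem \ref{equiv}, which reduces the large deviation principle to the tail asymptotic \eqref{log-prob}: $\lim_{t\to\infty}\frac{1}{b(t)}\log\mathbb{P}(L_t\geq \lambda)=-I(\lambda)$ for each $\lambda>0$. I would first verify that $I$ satisfies the structural hypotheses of Theorem \ref{equiv} (strictly increasing and continuous), and then derive matching exponential upper and lower bounds via a one-sided G\"artner--Ellis argument.

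For the analytic properties of $I$: each $\Lambda_t(\theta):=\frac{1}{b(t)}\log\E\exp(\theta b(t) L_t)$ is convex in $\theta$ by H\"older, hence so is the pointwise limit $\Lambda$. With $\Lambda(0)=0$ and the flatness/steepness conditions, $\Lambda'$ maps $D_\Lambda^o=(0,a)$ bijectively onto $(0,\infty)$. Since $I$ is the Fenchel conjugate of $\Lambda$ restricted to $\theta>0$, for $0<\lambda_1<\lambda_2$ one can pick $\theta\in D_\Lambda^o$ with $\Lambda'(\theta)\in(\lambda_1,\lambda_2)$ and obtain $I(\lambda_2)-I(\lambda_1)\geq \theta(\lambda_2-\lambda_1)>0$, giving strict monotonicity. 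Continuity on $(0,\infty)$ follows from convex-analytic properties of the Fenchel conjugate of an essentially smooth convex function.

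The upper bound is the easy half. For $\lambda>0$ and $\theta\in D_\Lambda^o$, Markov's inequality gives
$$\mathbb{P}(L_t\geq \lambda)\leq \exp(-\theta b(t)\lambda)\,\E\exp(\theta b(t) L_t),$$
so $\limsup \frac{1}{b(t)}\log\mathbb{P}(L_t\geq \lambda)\leq -\theta\lambda+\Lambda(\theta)$, and optimizing over $\theta>0$ yields $\limsup\leq -I(\lambda)$. For the lower bound, let $\theta_\lambda\in D_\Lambda^o$ be the unique solution of $\Lambda'(\theta_\lambda)=\lambda$ (it exists by the surjectivity just established), so that $I(\lambda)=\theta_\lambda\lambda-\Lambda(\theta_\lambda)$. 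Introduce the tilted laws
$$\frac{d\mathbb{Q}_t}{d\mathbb{P}}=\exp\bigl(\theta_\lambda b(t) L_t-b(t)\Lambda_t(\theta_\lambda)\bigr),$$
and for fixed $\e>0$ write
$$\mathbb{P}(L_t\in[\lambda,\lambda+\e])\geq \exp\bigl(-\theta_\lambda b(t)(\lambda+\e)+b(t)\Lambda_t(\theta_\lambda)\bigr)\,\mathbb{Q}_t(L_t\in[\lambda,\lambda+\e]).$$
Taking $\liminf$ and then $\e\downarrow 0$ would give $\liminf \frac{1}{b(t)}\log\mathbb{P}(L_t\geq\lambda)\geq -I(\lambda)$, provided $\liminf_{t\to\infty}\mathbb{Q}_t(L_t\in[\lambda,\lambda+\e])>0$.

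The main obstacle is precisely this concentration of $L_t$ near $\lambda$ under $\mathbb{Q}_t$. I would establish it by reapplying the exponential Chebyshev argument to $\mathbb{Q}_t$ itself: for $\theta'$ in a neighborhood of $0$,
$$\frac{1}{b(t)}\log \E_{\mathbb{Q}_t}\exp\bigl(\theta' b(t)(L_t-\lambda)\bigr)\longrightarrow \Lambda(\theta_\lambda+\theta')-\Lambda(\theta_\lambda)-\theta'\lambda,$$
whose Legendre transform vanishes only at the origin thanks to $\Lambda'(\theta_\lambda)=\lambda$ and strict convexity of $\Lambda$ near $\theta_\lambda$ (inherited from differentiability in $D_\Lambda^o$ together with surjectivity of $\Lambda'$). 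This produces the required exponential concentration of $L_t$ about $\lambda$ under $\mathbb{Q}_t$, so $\mathbb{Q}_t(L_t\in[\lambda,\lambda+\e])$ stays bounded away from zero. Steepness at the right endpoint is what guarantees that $\theta_\lambda$ remains in $D_\Lambda^o$ for every $\lambda>0$, while flatness at $0$ ensures $I(\lambda)>0$ for $\lambda>0$. Combining the matching exponential bounds gives \eqref{log-prob}, and Theorem \ref{equiv} transfers this to the LDP.
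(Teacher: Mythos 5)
Your overall strategy is sound, and it is worth noting that the paper itself offers no proof of this statement: Theorem \ref{thm-GE} is quoted verbatim from \cite[Theorem 1.2.4]{chen}, so any self-contained argument is necessarily independent of the source. What you write is the classical G\"artner--Ellis route (Chebyshev upper bound plus exponential tilting for the lower bound), reduced to the one-dimensional tail statement \eqref{log-prob} and then fed into Theorem \ref{equiv}; this is a legitimate way to prove the result in this setting. The convex-analytic part is essentially right, modulo small imprecisions: to get $I(\lambda_2)-I(\lambda_1)\geq\theta(\lambda_2-\lambda_1)$ you should take $\theta=\theta_{\lambda_1}$, the maximizer for $\lambda_1$ (which exists and is positive because $\Lambda'$ is continuous, $\Lambda'(0^+)=0$ and $\Lambda'$ is steep), not merely some $\theta$ with $\Lambda'(\theta)\in(\lambda_1,\lambda_2)$; and ``bijectively'' should be ``onto'', since differentiability plus steepness do not rule out affine pieces of $\Lambda$. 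Likewise, strict convexity near $\theta_\lambda$ is neither guaranteed nor needed: the concentration under $\mathbb{Q}_t$ only requires $\tilde\Lambda'(0)=0$ for $\tilde\Lambda(\theta')=\Lambda(\theta_\lambda+\theta')-\Lambda(\theta_\lambda)-\theta'\lambda$, which follows from differentiability of $\Lambda$ at the interior point $\theta_\lambda$.

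The genuine gap is in the last step of the lower bound. Your two-sided Chebyshev argument under $\mathbb{Q}_t$ shows $\mathbb{Q}_t\bigl(|L_t-\lambda|<\e\bigr)\to 1$, but this does \emph{not} imply that $\mathbb{Q}_t\bigl(L_t\in[\lambda,\lambda+\e]\bigr)$ stays bounded away from zero: all of the $\mathbb{Q}_t$-mass could sit in $[\lambda-\e,\lambda)$, in which case the one-sided interval you need carries vanishing probability and the chain of inequalities collapses. The standard repair is to tilt at a strictly larger level: fix $\lambda'\in(\lambda,\lambda+\e)$, take $\theta_{\lambda'}$ with $\Lambda'(\theta_{\lambda'})=\lambda'$, and use the symmetric window $(\lambda'-\delta,\lambda'+\delta)\subseteq(\lambda,\infty)$, for which the two-sided concentration does apply. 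This yields $\liminf_{t\to\infty}\frac{1}{b(t)}\log\mathbb{P}(L_t\geq\lambda)\geq -\theta_{\lambda'}(\lambda'+\delta)+\Lambda(\theta_{\lambda'})$, and letting $\delta\downarrow0$ and then $\lambda'\downarrow\lambda$, using the continuity of $I$ you have already established (together with local boundedness of $\lambda'\mapsto\theta_{\lambda'}$), gives $\liminf\geq -I(\lambda)$. With that correction the matching bounds prove \eqref{log-prob}, and Theorem \ref{equiv} converts it into the LDP as you intend.
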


As the converse of the G\"{a}rtner-Ellis theorem, we have the following Varadhan's integral lemma (see \cite[Theorem 1.1.6]{chen}).
\begin{lemma}[Varadhan's integral lemma]\label{lem-vil}
Assume that the stochastic process $L$ satisfy the large deviation principle with speed $\{b(t), t\geq 0\}$ and a good rate function $I(\cdot)$. Let $\phi:\mathbb{R}^+\to \mathbb{R}$ be any continuous function. Suppose that, for some $\rho>1$, the following condition holds
\[
\limsup\limits_{t\to\infty}\frac{1}{b(t)}\log\mathbb{E}\exp\left\{\rho b(t)\phi(L_t)\right\}<\infty,
\]
then we have
\[
\lim\limits_{t\to\infty}\frac{1}{b(t)}\log\mathbb{E}\exp\left\{ b(t)\phi(L_t)\right\}=\sup\limits_{\lambda\in\mathbb{R}^+}\left\{\phi(\lambda)-I(\lambda)\right\}.
\]
\end{lemma}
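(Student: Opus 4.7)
The plan is to establish matching upper and lower bounds for $\frac{1}{b(t)}\log \E\exp\{b(t)\phi(L_t)\}$, the classical two-sided strategy underlying Varadhan-type results.

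For the \emph{lower bound}, I would fix $\lambda_0\ge 0$ and $\eta>0$. By continuity of $\phi$ there exists $\delta>0$ such that $\phi\ge \phi(\lambda_0)-\eta$ on the open neighborhood $G=(\lambda_0-\delta,\lambda_0+\delta)\cap \R^+$. Restricting the expectation to the event $\{L_t\in G\}$ yields
\[
\E\exp\{b(t)\phi(L_t)\}\ge \exp\{b(t)(\phi(\lambda_0)-\eta)\}\,\mathbb P(L_t\in G).
\]
The LDP lower bound on the open set $G$, combined with $\inf_G I\le I(\lambda_0)$ and $\eta\to 0$, gives $\liminf_{t\to\infty}\tfrac{1}{b(t)}\log\E\exp\{b(t)\phi(L_t)\}\ge \phi(\lambda_0)-I(\lambda_0)$, and taking the supremum over $\lambda_0$ finishes this half.

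For the \emph{upper bound}, I would fix $M,\eta>0$. By continuity of $\phi$ and compactness of $[0,M]$ there is a finite open cover $\{U_{\lambda_i}\}_{i=1}^n$ of $[0,M]$ with $\phi\le \phi(\lambda_i)+\eta$ on $\overline{U_{\lambda_i}}$, so that
\begin{align*}
\E\exp\{b(t)\phi(L_t)\}&\le \sum_{i=1}^n \exp\{b(t)(\phi(\lambda_i)+\eta)\}\,\mathbb P\bigl(L_t\in \overline{U_{\lambda_i}}\bigr)\\
&\quad+\E\bigl[\mathbf 1_{\{L_t>M\}}\exp\{b(t)\phi(L_t)\}\bigr].
\end{align*}
Applying the LDP upper bound on each closed set $\overline{U_{\lambda_i}}$ and the elementary identity $\limsup_{t}\tfrac{1}{b(t)}\log\sum_{i=1}^n e^{b(t)c_i(t)}=\max_i \limsup_{t} c_i(t)$, the first sum is asymptotically dominated by $\sup_{\lambda\in \R^+}\{\phi(\lambda)-I(\lambda)\}+\eta$. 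For the tail, H\"older's inequality with the exponent $\rho>1$ supplied by the hypothesis and its conjugate $\rho'=\rho/(\rho-1)$ gives
\[
\E\bigl[\mathbf 1_{\{L_t>M\}}\exp\{b(t)\phi(L_t)\}\bigr]\le \bigl(\E\exp\{\rho b(t)\phi(L_t)\}\bigr)^{1/\rho}\,\mathbb P(L_t>M)^{1/\rho'},
\]
whose first factor is controlled by the moment hypothesis. Goodness of $I$ forces $\inf_{\lambda>M}I(\lambda)\to\infty$ as $M\to\infty$ (otherwise a level set $\{I\le N\}$ would fail to be compact), so the LDP upper bound on the closed set $[M,\infty)$ makes the tail factor vanish on the exponential scale. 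Sending $\eta\to 0$ and $M\to\infty$ then matches the two bounds and yields the claimed equality.

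The main obstacle is precisely the tail estimate on $\{L_t>M\}$: on the unbounded domain $\R^+$ the continuous but possibly unbounded $\phi$ could contribute a divergent piece that no finite cover of a bounded interval can absorb. The moment hypothesis with $\rho>1$ and the goodness of the rate function $I$ conspire, through H\"older's inequality and the compactness of level sets, to neutralize this threat; without either assumption the conclusion of the lemma can fail.
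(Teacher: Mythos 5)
The paper does not prove this lemma at all: it is quoted verbatim from Chen's monograph (Theorem 1.1.6 there) and used as a black box, so there is no in-paper argument to compare against. Your proof is the standard Dembo--Zeitouni-style argument for Varadhan's lemma on an unbounded state space, and it is essentially correct: the lower bound via the LDP on small open neighborhoods, the upper bound via a finite cover of $[0,M]$ plus the $\max$-of-exponentials identity, and the tail controlled by H\"older with the moment condition at exponent $\rho>1$ together with $\inf_{\lambda\ge M}I(\lambda)\to\infty$ (which you correctly derive from goodness of $I$). This is exactly the right set of ingredients, and your closing remark about why both the $\rho>1$ hypothesis and goodness are needed is accurate.

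One small imprecision in the upper bound: you only record the one-sided estimate $\phi\le\phi(\lambda_i)+\eta$ on $\overline{U_{\lambda_i}}$, but to conclude that $\phi(\lambda_i)+\eta-\inf_{\overline{U_{\lambda_i}}}I\le\sup_{\lambda}\{\phi(\lambda)-I(\lambda)\}+C\eta$ you must compare $\phi(\lambda_i)$ with $\phi(\mu_i)$ at a point $\mu_i\in\overline{U_{\lambda_i}}$ that (nearly) attains $\inf_{\overline{U_{\lambda_i}}}I$, i.e.\ you need the two-sided bound $|\phi-\phi(\lambda_i)|\le\eta$ on $\overline{U_{\lambda_i}}$. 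Continuity of $\phi$ gives this for free if you shrink the $U_{\lambda_i}$ appropriately, so it is a one-line fix, not a gap in the method.
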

\subsection{Comparison strategy}
We first validate the definition of $\int_0^t \delta(X_s)ds$ for a class of Gaussian process. Denote the heat kernel on  $\mathbb{R}^d$ by $\displaystyle p_\varepsilon(x)=(2\pi \varepsilon)^{-\frac{d}2}e^{-\frac{|x|^2}{2\varepsilon}}$. 

\begin{proposition}\label{lemma-lt}
Let $\{X_t=(X_t^1, \dots, X_t^d), 0\le t\le T\}$ be a centered Gaussian process, the components of which are independent and have the same distribution. If there exist constants $C_T>0$ and  $0<\alpha<1/d$, such that, for $0\le r\le s\le T$,
\begin{equation}\label{con1}
\Var(X_r^1)\ge C_T r^{2\alpha} \text{ and } \Var(X_s^1|X_r^1)=\mathbb{E}\left(\left[X_s^1-\mathbb{E}\left(X_s^1\big|X_r^1\right)\right]^2\big|X_r^1\right)\ge C_T (s-r)^{2\alpha},
\end{equation}
then $\int_0^T p_\varepsilon(X_t)dt$ converges in $L^2$ as $\varepsilon$ goes to $0$.  The limit is denoted by $L_T(X):=\int_0^T\delta(X_s)ds$ and called the local time of $X$.
\end{proposition}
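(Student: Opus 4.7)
The plan is to show that $\{\int_0^T p_\varepsilon(X_t)dt\}_{\varepsilon>0}$ is Cauchy in $L^2(\Omega)$, which gives the existence of the $L^2$-limit. By Plancherel, write
$$p_\varepsilon(x)=(2\pi)^{-d}\int_{\R^d}e^{i\xi\cdot x}e^{-\varepsilon|\xi|^2/2}d\xi,$$
and expand the $L^2$-norm as
$$\E\Bigl[\int_0^T(p_\varepsilon(X_t)-p_{\varepsilon'}(X_t))dt\Bigr]^2=\int_0^T\!\!\int_0^T\E\bigl[(p_\varepsilon-p_{\varepsilon'})(X_r)\,(p_\varepsilon-p_{\varepsilon'})(X_s)\bigr]drds.$$
By symmetry, it suffices to treat the region $0\le r\le s\le T$. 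For each fixed $(r,s)$ the integrand in the Fourier representation converges pointwise to $0$ as $\varepsilon,\varepsilon'\to0$, so the whole argument reduces to producing an $\varepsilon$-independent, integrable dominant.

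The key step is the uniform bound
$$\E[p_\varepsilon(X_r)p_{\varepsilon'}(X_s)]\le \frac{C}{\bigl(r(s-r)\bigr)^{\alpha d}},\qquad 0<r<s\le T.$$
To get it, I use independence of components and condition on $X_r^1$. Since $X$ is centered Gaussian, $\E[X_s^1\mid X_r^1]=aX_r^1$ for some $a=a(r,s)$, and $U:=X_s^1-aX_r^1$ is independent of $X_r^1$ with $\Var(U)=\Var(X_s^1\mid X_r^1)$. Hence for each coordinate $j$,
$$\Var(\xi_j X_r^1+\eta_j X_s^1)=(\xi_j+a\eta_j)^2\Var(X_r^1)+\eta_j^2\Var(X_s^1\mid X_r^1).$$
Invoking the hypothesis \eqref{con1} gives the lower bound $\ge C_T\bigl[(\xi_j+a\eta_j)^2r^{2\alpha}+\eta_j^2(s-r)^{2\alpha}\bigr]$. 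Therefore
$$\E[p_\varepsilon(X_r)p_{\varepsilon'}(X_s)]\le(2\pi)^{-2d}\prod_{j=1}^d\int_{\R^2}\exp\!\Bigl(-\tfrac12 C_T\bigl[(\xi_j+a\eta_j)^2r^{2\alpha}+\eta_j^2(s-r)^{2\alpha}\bigr]\Bigr)d\xi_jd\eta_j,$$
and the change of variables $(u_j,v_j)=(\xi_j+a\eta_j,\eta_j)$, with unit Jacobian, evaluates the right-hand side as a product of one-dimensional Gaussian integrals yielding the claimed bound. The same argument applied to $\E[p_\varepsilon(X_r)p_{\varepsilon'}(X_s)]$, $\E[p_\varepsilon(X_r)p_{\varepsilon}(X_s)]$, and $\E[p_{\varepsilon'}(X_r)p_{\varepsilon'}(X_s)]$ gives a uniform dominant for the integrand in the Cauchy expression.

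Finally I verify that the dominant is integrable on $[0,T]^2$: using the Beta function, for fixed $s\in(0,T]$,
$$\int_0^s\frac{dr}{r^{\alpha d}(s-r)^{\alpha d}}=B(1-\alpha d,1-\alpha d)\,s^{1-2\alpha d},$$
and then $\int_0^T s^{1-2\alpha d}ds<\infty$; both steps use precisely the hypothesis $\alpha d<1$. Dominated convergence in the Fourier representation now shows that the Cauchy-difference tends to $0$, so $\int_0^T p_\varepsilon(X_t)dt$ converges in $L^2$ as $\varepsilon\to0$. The main obstacle is isolating the cross term in $\Var(\xi X_r+\eta X_s)$ cleanly so that (\ref{con1}) applies, and this is handled by the conditioning/decomposition above; everything else is dominated convergence and a classical Beta-integral estimate.
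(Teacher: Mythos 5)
Your proof is correct and follows essentially the same route as the paper: Fourier representation of $p_\varepsilon$, a lower bound on $\Var(\xi X_r^1+\eta X_s^1)$ coming from the conditional-variance hypothesis (your explicit conditioning decomposition is exactly the proof of the determinant identity $\det Q(r,s)=\Var(X_r^1)\Var(X_s^1\mid X_r^1)$ that the paper cites), and dominated convergence with the Beta-integral bound using $\alpha d<1$. The only cosmetic difference is that you verify the Cauchy property directly by expanding the four cross terms, whereas the paper shows $\E[\int p_\varepsilon\int p_\delta]$ converges to a common limit, which is equivalent.
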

\begin{proof}
It suffices to show that the sequence $\E\left[\int_0^T p_\e(X_r)dr\int_0^T p_\d(X_s) ds\right]$ converges to the same limit as $\e$ and $\d$ go to zero. Note that the Fourier transform of $p_\e(x)$ is $$\hat p(\xi)=\int_{\mathbb{R}^d}e^{-ix\cdot \xi}p_\varepsilon(x)dx=e^{-\e|\xi|^2/2},$$ 
and also note that the inverse Fourier transform implies $$p_\e(x)=(2\pi)^{-d/2}\int_{\R^d} e^{ix\cdot \xi}e^{-\e|\xi|^2/2}d\xi, $$
where $dx=dx_1\dots dx_d$ and $d\xi=d\xi_1\dots d\xi_d.$

Then for fixed $ r, s\in [0,T]$, we have
\begin{align*}
&\E[p_\e(X_r)p_\d(X_s)]\\
=&(2\pi)^{-d}\int_{\R^{2d}} \exp\left(-\frac12(\e|\xi|^2+\d|\eta|^2)\right) \E \exp\left(i(X_r\cdot \xi+X_s\cdot \eta)\right) d\xi d\eta\\
=& \left((2\pi)^{-1} \int_{\R^2}  \exp\left(-\frac12(\e \xi^2+\d\eta^2)\right) \E \exp\left(i(X_r^1 \xi+X_s^1 \eta)\right)  d\xi d\eta\right)^d\\
=& \left((2\pi)^{-1} \int_{\R^2}  \exp\left(-\frac12(\e \xi^2+\d\eta^2)\right)  \exp\left(-\frac12\Var(X_r^1 \xi+X_s^1 \eta)\right)  d\xi d\eta\right)^d\\
=& \left((2\pi)^{-1} \int_{\R^2}  \exp\left(-\frac12(\e \xi^2+\d\eta^2)\right)  \exp\left(-\frac12 (\xi,\eta) Q(r,s) (\xi, \eta)^T\right) d\xi d\eta\right)^d,
\end{align*}
where $Q(r,s)$ is the covariance matrix of $(X_r^1, X_s^1).$ It is well known (see, e.g., \cite{Berman} or \cite[Lemma 3.8]{clrs}) that $\det Q(r,s)=\Var(X_r^1)\Var(X_s^1|X_r^1)=\Var(X_s^1)\Var(X_r^1|X_s^1)$, and hence,  by \eqref{con1}, $$\det Q(r,s)\ge C_T^2 (r\wedge s)^{2\alpha} |r-s|^{2\alpha}.$$ 

Therefore, by the dominated convergence theorem, we can get 
$$\lim_{\e,\d\to0}\E[p_\e(X_r)p_\d(X_s)]=\left(\det Q(r,s)\right)^{-d/2}\le C_T^{-d} (r\wedge s)^{-\alpha d} |r-s|^{-\alpha d}. $$

Since $\alpha d<1$, we obtain $\int_0^T\int_0^T(r\wedge s)^{-\alpha d} |r-s|^{-\alpha d}drds<\infty$. Then one can apply  the dominated convergence theorem to deduce 
\begin{align*}
\lim_{\e,\d\to0}\E\left[\int_0^T p_\e(X_r)dr\int_0^T p_\d(X_s) ds\right]&=\int_0^T\int_0^T \left(\det Q(r,s)\right)^{-d/2} drds. 
\end{align*}
The proof is concluded.\hfill 
\end{proof}
\begin{remark}\label{rmk-fbm-sub-bi}
If the conditions in \eqref{con1} are satisfied, we say that the Gaussian process $X$ has local nondeterminism. In particular,   when $Hd<1$, $HKd<1$, by Proposition \ref{lemma-lt} and the local nondeterminism of fBm $B^H$, sub-fBm $S^H$ and bi-fBm $Z^{H,K}$ (see \cite{berman, luan, tx}), the local times $L_t(B^H)$,  $L_t(S^H)$ and  $L_t(Z^{H,K})$ exist. 
\end{remark}
\begin{remark}\label{rmk-RL}
Let $X^\alpha_t=\int_0^t (t-s)^{\alpha-\frac12}dW_s$ be the $1$-dimensional Riemann-Liouville process, where $W$ is a standard Brownian motion. Then, we can show that
\begin{itemize}
\item[(i)]
$
Var(X_t^\alpha)=\int_0^t(t-s)^{2\alpha-1}ds=\frac{t^{2\alpha}}{2\alpha}\ ;
$
\item[(ii)] for any $0\leq r<t<\infty$, 
\[
\mathbb{E}\left(X_t^\alpha\big|\mathcal{F}_r^W\right)=\mathbb{E}\left(\int_0^t(t-s)^{\alpha-\frac12}dW_s\big|\mathcal{F}_r^W\right)=\int_0^r(t-s)^{\alpha-\frac12}dW_s,
\]
and
\begin{align*}
Var(X_t^\alpha\big|X_r^\alpha) \geq&\ \E\left(Var(X_t^\alpha\big|\mathcal{F}_r^W)\big|X_r^\alpha\right)\\
=&\ \E\left(\left.\E\left(\left.\left[\int_r^t(t-s)^{\alpha-\frac12}dW_s\right]^2\right|\mathcal{F}_r^W\right)\right|X_r^\alpha\right)\\
=&\ \frac{1}{2\alpha}(t-r)^{2\alpha};
\end{align*}
\item[(iii)] for any $0\leq r<t<\infty$, by some changes of variables 
\begin{align*}
&Var(X_t^\alpha-X_r^\alpha)\\
=&\ \E\left(\int_0^r\left[(t-s)^{\alpha-\frac12}-(r-s)^{\alpha-\frac12}\right]dW_s\right)^2+\E\left(\int_r^t(t-s)^{\alpha-\frac12}dW_s\right)^2\\
=&\ \int_0^r\left[(t-s)^{\alpha-\frac12}-(r-s)^{\alpha-\frac12}\right]^2ds+\int_r^t(t-s)^{2\alpha-1}ds\\
=&\ (t-r)^{2\alpha}\int_0^{r/(t-r)}\left[(1+u)^{\alpha-\frac12}-u^{\alpha-\frac12}\right]^2du+\frac{1}{2\alpha}(t-r)^{2\alpha}\\
\leq&\ (t-r)^{2\alpha}\int_0^{\infty}\left[(1+u)^{\alpha-\frac12}-u^{\alpha-\frac12}\right]^2du+\frac{1}{2\alpha}(t-r)^{2\alpha}\\
=&\ C_\alpha(t-r)^{2\alpha},
\end{align*}
where $C_\alpha=\int_0^{\infty}\left[(1+u)^{\alpha-\frac12}-u^{\alpha-\frac12}\right]^2du+\frac{1}{2\alpha}$.
\end{itemize}
From $(i)$ and $(ii)$, we see that the Riemann-Liouville process has local nondeterminism.  An estimate for the variance of the increment of this process is given in $(iii)$.
\end{remark}

In the following sections, we always assume that the process $\{X_t, 0\leq t\leq 1\}$ can be viewed as a Gaussian random vector in a separable Banach space. The result below is an important property of Gaussian measure  (see e.g. \cite[Lemma 3.7]{clrs}). 
\begin{lemma}\label{max-principle}
Suppose $\mu$ is a centered Gaussian measure on a separable Banach space $B$. Let $\mathbb H_\mu$ denote the RKHS of $\mu$, and let $h:B\mapsto \R^+$ be a symmetric measurable function ($h(-x)=h(x)$ for any $x\in B$). Then, for every $y$ in $\mathbb H_\mu$,  we have
$$\int_B h(x+y) \mu(dx) \ge \exp\left(-\frac12 \|y\|_{\mathbb H_\mu}^2\right)\int_B h(x) \mu(dx),$$
where $\|y\|_{\mathbb H_\mu}$ is the norm of $y$ in $\mathbb H_\mu.$
\end{lemma}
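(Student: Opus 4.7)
The plan is to combine the Cameron-Martin change-of-variables formula with the symmetry of the centered Gaussian measure $\mu$ and the symmetry of $h$, and then exploit the elementary inequality $\cosh(t)\geq 1$.

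First I would invoke the Cameron-Martin theorem in the Banach-space setting: for $y\in \bh_\mu$ there is a Paley-Wiener element $\hat y$ in the first Wiener chaos of $\mu$ (an element of $L^2(B,\mu)$ with $\int\hat y^{\,2}\,d\mu=\|y\|_{\bh_\mu}^2$) such that the pushforward of $\mu$ under the translation $x\mapsto x+y$ is absolutely continuous with respect to $\mu$, with Radon–Nikodym derivative $\exp(\hat y(x)-\tfrac12\|y\|_{\bh_\mu}^2)$. Applied to the non-negative measurable function $h$, this gives
\[
\int_B h(x+y)\,\mu(dx)=e^{-\frac12\|y\|_{\bh_\mu}^2}\int_B h(x)\,e^{\hat y(x)}\,\mu(dx).
\]

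Next I would use that $\mu$ is centered, so the reflection map $x\mapsto -x$ preserves $\mu$, and that $\hat y$ is linear (hence odd on the support of $\mu$, so $\hat y(-x)=-\hat y(x)$ $\mu$-a.s.). Combined with the assumed symmetry $h(-x)=h(x)$, a change of variable gives
\[
\int_B h(x)\,e^{\hat y(x)}\,\mu(dx)=\int_B h(x)\,e^{-\hat y(x)}\,\mu(dx),
\]
so averaging the two identical integrals yields
\[
\int_B h(x)\,e^{\hat y(x)}\,\mu(dx)=\int_B h(x)\cosh\!\bigl(\hat y(x)\bigr)\,\mu(dx)\geq \int_B h(x)\,\mu(dx),
\]
the last inequality using $h\geq 0$ and $\cosh(t)\geq 1$ for all $t\in\R$. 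Substituting this bound into the Cameron-Martin identity from the previous step yields the desired inequality.

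There is no substantive obstacle beyond correctly citing the Cameron-Martin formula in Banach-space form and the Paley-Wiener construction of $\hat y$ from $y\in\bh_\mu$ (both of which are in \cite{Janson} referenced earlier); the rest is a one-line symmetrization trick. One minor point I would state explicitly is that both sides of the inequality are well defined in $[0,\infty]$, so no integrability assumption on $h$ is needed — if the right-hand side is $+\infty$ the statement is vacuous, and otherwise dominated/monotone convergence justifies the symmetrization above.
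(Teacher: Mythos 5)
Your proof is correct: the Cameron--Martin identity $\int_B h(x+y)\,\mu(dx)=e^{-\frac12\|y\|_{\mathbb H_\mu}^2}\int_B h(x)e^{\hat y(x)}\,\mu(dx)$, the $\mu$-a.s.\ oddness of the Paley--Wiener functional $\hat y$, and the symmetrization via $\cosh(t)\ge 1$ together give exactly the claimed bound, and you correctly note that no integrability hypothesis on $h$ is needed. The paper itself does not prove this lemma --- it simply cites \cite[Lemma 3.7]{clrs} --- and your argument is precisely the standard one behind that reference, so there is nothing to reconcile.
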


The inequalities stated in the following will be used in the proof of Proposition \ref{prop-ld} and the sub-additive property for the Riemann-Liouville process.
\begin{lemma}\label{lem-max-principle}
Let $\gamma$ be a tempered distribution on $\R^d$ with its Fourier transform $\nu(dx)$ being a non-negative measure on $\R^d$, i.e., $\gamma$ is a non-negative definite distribution. Then for any centered Gaussian random vector $X\sim N(0, \Sigma)$, where $\Sigma$ is a positive definite matrix in $\mathbb{R}^{d\times d}$,  we have 
$$\E[\gamma(X+a)]\le \E[\gamma(X)], \text{ for all } a\in \R^d.$$
\end{lemma}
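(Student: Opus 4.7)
The plan is to use the Bochner-type Fourier representation of $\gamma$ provided by the hypothesis: since $\gamma$ is a non-negative definite tempered distribution whose Fourier transform is the non-negative measure $\nu$, we may write (up to a normalizing constant that will not affect the inequality)
\begin{equation*}
\gamma(x) = (2\pi)^{-d/2}\int_{\R^d} e^{ix\cdot \xi}\, \nu(d\xi).
\end{equation*}
This formula is the essential mechanism: it turns the distribution $\gamma$ into an ordinary integral of bounded characters against a positive measure, which is exactly what makes the random argument $X+a$ manageable inside an expectation.

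Next I would substitute $x = X+a$ and take the expectation, using Fubini (justified by non-negativity, or more carefully via a mollification $\gamma_\varepsilon = \gamma * p_\varepsilon$ whose Fourier transform is $e^{-\varepsilon|\xi|^2/2}\nu(d\xi) \ge 0$, then letting $\varepsilon\to 0$). Using the Gaussian characteristic function $\mathbb{E}[e^{iX\cdot\xi}] = e^{-\frac12 \xi^{T}\Sigma\xi}$, this gives
\begin{equation*}
\E[\gamma(X+a)] = (2\pi)^{-d/2}\int_{\R^d} e^{ia\cdot\xi}\, e^{-\frac12\xi^{T}\Sigma\xi}\,\nu(d\xi).
\end{equation*}

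Now I would use the fact that $\gamma$ is real-valued (as it must be for the examples of interest), which forces the measure $\nu$ to be invariant under $\xi\mapsto -\xi$. Symmetrizing the integrand replaces $e^{ia\cdot\xi}$ by $\cos(a\cdot\xi)$, so
\begin{equation*}
\E[\gamma(X+a)] = (2\pi)^{-d/2}\int_{\R^d} \cos(a\cdot\xi)\, e^{-\frac12\xi^{T}\Sigma\xi}\,\nu(d\xi).
\end{equation*}
Because the weight $e^{-\frac12\xi^{T}\Sigma\xi}\,\nu(d\xi)$ is a non-negative measure and $\cos(a\cdot\xi) \le 1$, we immediately obtain
\begin{equation*}
\E[\gamma(X+a)] \le (2\pi)^{-d/2}\int_{\R^d} e^{-\frac12\xi^{T}\Sigma\xi}\,\nu(d\xi) = \E[\gamma(X)],
\end{equation*}
which is the desired inequality.

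The main subtlety — and the place I would have to be careful — is giving meaning to $\E[\gamma(X+a)]$ when $\gamma$ is a genuine distribution (e.g., $\gamma=\delta$). The cleanest route is to prove the inequality first for the mollification $\gamma_\varepsilon$, for which everything is a classical integral and Fubini is unproblematic since $e^{-\varepsilon|\xi|^2/2}\nu(d\xi)$ is a finite measure when $\nu$ has polynomial growth and $\Sigma$ is positive definite. One then passes to the limit $\varepsilon \downarrow 0$, relying on the positive-definiteness of $\Sigma$ together with the growth condition on $\nu$ to ensure that the Gaussian factor $e^{-\frac12\xi^{T}\Sigma\xi}$ suppresses the mass of $\nu$ at infinity; the examples $\gamma=\delta$, $|x|^{-\beta}$, $\prod_i|x_i|^{-\beta_i}$ all satisfy this. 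Apart from this mollification/limit issue, the argument is a one-line consequence of $|\cos|\le 1$.
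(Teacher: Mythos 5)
Your proof is correct and takes essentially the same route as the paper's: both pass to the Fourier side, where $\E[\gamma(X+a)]$ becomes $\int_{\R^d} e^{ia\cdot\xi}\,e^{-\frac12\xi^{T}\Sigma\xi}\,\nu(d\xi)$ (the paper writes this as the Parseval pairing of $\nu$ with $\hat p_\Sigma(\xi)e^{-ia\cdot\xi}$), and both conclude because $e^{-\frac12\xi^{T}\Sigma\xi}\,\nu(d\xi)$ is a non-negative measure against which the character is bounded by $1$. Your additional care with mollification and the symmetry of $\nu$ merely makes explicit what the paper leaves implicit.
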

\begin{proof}
Denote  the probability density function of the Gaussian random vector $X$ by $p_{\Sigma}(x)$.  Note that $p_{\Sigma}(x)$ belongs to the Schwartz space $\mathcal S(\R^d)$ and its Fourier transform is $\hat p_{\Sigma}(\xi)=\exp\{-\frac{\xi^T\Sigma \xi}{2}\}$. Then
\begin{align*}
&\E[\gamma(X+a)]=\int_{\R^d} \gamma(x+a) p_{\Sigma}(x) dx \\
&=\int_{\R^d} \hat p_{\Sigma}(\xi) e^{-ia\cdot \xi}\nu(d\xi)\le \int_{\R^d} \hat p_{\Sigma}(\xi) \nu(d\xi)=\E[\gamma(X)]. 
\end{align*}
We complete the proof.\hfill
\end{proof}
\begin{remark}\label{rem-max-principle}
Similarly, one can show that for a centered Gaussian vector $(X_1,\dots, X_n)$ with $X_i$ being a $d$-dimensional Gaussian vector, we have for any $a=(a_1,\dots, a_n)\in \R^{d\times n}$
$$\E\left[\prod_{i=1}^n \gamma(X_i+a_i)\right]\le \E\left[\prod_{i=1}^n\gamma(X_i)\right].$$ 
When $\gamma$ is a measurable function which is also symmetric ($\gamma(-x)=\gamma(x)$), and the result can also be obtained by \cite[Lemma 3.7 (i)]{clrs}.
\end{remark}

The following proposition is the main result in this subsection. 
\begin{proposition}\label{prop-ld}
Let $\{X_t, t\ge 0\}$, $\{Y_t, t\ge 0\}$ and $\{\eta_t, t\ge 0\}$ be $d$-dimensional centered Gaussian processes satisfying the following conditions:
\begin{itemize}
\item [(i)] there exists $\alpha>0$ such that $\{X_{at},t\ge0\}\overset{d}{=}a^{\alpha} \{X_t, t\ge 0\}$ and $\{Y_{at}, t\ge 0\}\overset{d}{=}a^{\alpha} \{Y_t, t\ge 0\}$ for any $a>0$;
\item[(ii)] $Y\overset{d}{=}X+\eta$;
\item[(iii)] $X$ and $\eta$ are independent;
\item[(iv)] for any $\e \in(0,1) $, there exists a process  $\eta^\e$ such that $\eta_t^\e=\eta_t$ for $t\ge \e$, and  $\{\eta_t^\e, t\in[0,1]\}$ belongs to the RKHS of $\{X_t, t\in[0,1]\}$ almost surely.
\end{itemize}
 If either $\lim_{t\to \infty}\frac1t \log \E\exp\left(\int_0^t \gamma(X_s)ds\right)  \text{ or }  \lim_{t\to \infty}\frac1t \log \E\exp\left(\int_0^t \gamma(Y_s)ds\right)$ exits as a finite number, 
then both limits exist and are equal to each other.
\end{proposition}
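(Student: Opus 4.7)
The plan is to prove the equality by sandwiching $M_X(t):=\E\exp\int_0^t\gamma(X_s)\,ds$ and $M_Y(t):=\E\exp\int_0^t\gamma(Y_s)\,ds$ against each other up to subexponential factors. Throughout I write $p:=1-\alpha\beta\in(0,1)$; self-similarity and the homogeneity of $\gamma$ give the scaling identity $\int_0^T\gamma(X_s)\,ds\stackrel{d}{=}T^p\int_0^1\gamma(X_u)\,du$ and the analogous statement for $Y$.

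The easy direction $M_Y(t)\le M_X(t)$ would follow from Taylor-expanding $\exp$ (all terms non-negative), interchanging sum and integral, conditioning on $\eta$ and exploiting the independence of $X$ and $\eta$, and applying Remark \ref{rem-max-principle} term-by-term (each of the three admissible $\gamma$'s is non-negative definite). Summing back recovers $M_Y(t)\le M_X(t)$, so in particular
\[
\limsup_t\tfrac{1}{t}\log M_Y(t)\le\liminf_t\tfrac{1}{t}\log M_X(t).
\]

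For the matching lower bound I first transport condition (iv) from $[0,1]$ to arbitrary $[0,T]$. Since $\eta$ inherits $\alpha$-self-similarity from $X$ and $Y$, the rescaled pair $(T^{-\alpha}X_{T\cdot},T^{-\alpha}\eta_{T\cdot})$ has the same law as $(X,\eta)$ on $[0,1]$; applying (iv) there and undoing the scaling produces $\eta^{(\epsilon,T)}$ that agrees with $\eta$ on $[T\epsilon,T]$, belongs to $\bh(X|_{[0,T]})$ a.s., and satisfies $\|\eta^{(\epsilon,T)}\|_{\bh(X|_{[0,T]})}\stackrel{d}{=}\|\eta^\epsilon\|_{\bh(X|_{[0,1]})}$ via the self-similar isometry of RKHSs. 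Since $\gamma\ge0$ and $\gamma$ is even, conditioning on $\eta^{(\epsilon,T)}$ and applying Lemma \ref{max-principle} to the symmetric functional $X\mapsto\exp\int_{T\epsilon}^T\gamma(X_s)\,ds$ yields
\[
M_Y(T)\ge C_\epsilon\,\E\exp\!\int_{T\epsilon}^T\!\gamma(X_s)\,ds = C_\epsilon\,\E\exp\!\left(T^p\!\int_\epsilon^1\gamma(X_u)\,du\right),
\]
where $C_\epsilon:=\E\exp(-\tfrac{1}{2}\|\eta^\epsilon\|^2_{\bh(X|_{[0,1]})})\in(0,1]$ is \emph{independent of $T$}. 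To pass from the truncated integrand back to the full one I would use a H\"older bootstrap: writing $\xi:=\int_0^1\gamma(X_u)\,du=\xi_{<\epsilon}+\xi_{>\epsilon}$ with $\xi_{<\epsilon}\stackrel{d}{=}\epsilon^p\xi$, conjugate exponents $p_1,p_2>1$ produce
\[
M_X(T)\le M_X\bigl(\epsilon T p_1^{1/p}\bigr)^{1/p_1}\bigl(\E\exp(p_2 T^p\xi_{>\epsilon})\bigr)^{1/p_2}.
\]
If $\lim\tfrac{1}{t}\log M_X(t)=L$ is assumed, rearranging this inequality and using $\E\exp(p_2T^p\xi_{>\epsilon})=\E\exp((p_2^{1/p}T)^p\xi_{>\epsilon})$ gives $\liminf_s\tfrac{1}{s}\log\E\exp(s^p\xi_{>\epsilon})\ge p_2^{1-1/p}L(1-\epsilon p_1^{1/p-1})$; combined with the Cameron--Martin bound this yields $\liminf_t\tfrac{1}{t}\log M_Y(t)\ge p_2^{1-1/p}L(1-\epsilon p_1^{1/p-1})$, and sending $\epsilon\to 0^+$ and then $p_2\to 1^+$ forces $\liminf\ge L$. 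The symmetric case ($\lim\tfrac{1}{t}\log M_Y(t)=L'$ assumed) is handled by choosing $\epsilon p_1^{1/p}\le 1$ so that monotonicity of $M_X$ collapses the H\"older inequality to $M_X(T)\le M_Y(p_2^{1/p}T)/C_\epsilon$; letting $p_2\to 1^+$ then gives $\limsup\tfrac{1}{t}\log M_X(t)\le L'$, matching the reverse inequality from the easy direction.

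The main obstacle is the order of limits in the H\"older tuning. Because $1/p>1$, the factor $p_1^{1/p-1}\to\infty$ as $p_1\to\infty$, so one must first send $t\to\infty$ (to kill $\log C_\epsilon /t$), then $\epsilon\to 0^+$ at fixed H\"older exponents (to kill $\epsilon p_1^{1/p-1}$), and only afterwards take $p_2\to 1^+$. A related technicality is verifying that the self-similar rescaling is an isometry between $\bh(X|_{[0,T]})$ and $\bh(X|_{[0,1]})$, which is precisely what keeps $C_\epsilon$ free of $T$ and makes its logarithm negligible on the $t$-scale.
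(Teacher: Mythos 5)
Your argument is correct in its essentials and follows the same strategy as the paper's proof: the upper bound via the rearrangement inequality of Remark \ref{rem-max-principle}, and the lower bound via the Cameron--Martin estimate of Lemma \ref{max-principle} applied after discarding $[0,\e T]$, followed by a H\"older interpolation to reabsorb the discarded initial piece, with the same order of limits ($t\to\infty$, then $\e\to0^+$, then the H\"older exponent to $1$). Two variations are worth recording. First, you apply Lemma \ref{max-principle} directly to the exponential functional on $[0,T]$, which obliges you to transport condition (iv) and the RKHS norm from $[0,1]$ to $[0,T]$ through the self-similar isometry; the paper stays on $[0,1]$, bounds each moment $\E\left(\int_0^1\gamma(Y_s)ds\right)^n$ (using Minkowski's inequality to compare $\int_\e^1$ with $\int_0^1$ in $L^n$) and resums the exponential series, so that (iv) is only ever invoked on $[0,1]$. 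Your route is equivalent, and the rescaling of the RKHS can be avoided altogether by first writing $\E\exp\int_{T\e}^{T}\gamma(Y_s)ds=\E\exp\left(T^{p}\int_\e^1\gamma(Y_u)du\right)$ via self-similarity and homogeneity and only then applying Lemma \ref{max-principle} on $[0,1]$. Second, you treat explicitly the case in which the limit is assumed for $Y$ rather than $X$; the paper dismisses this with ``without loss of generality'' although its argument is not symmetric in $X$ and $Y$, so your collapsed inequality $M_X(T)\le M_Y(p_2^{1/p}T)/C_\e$ is a genuine and welcome addition. The one real gap sits exactly there: passing from $M_X(T)\le M_X(T)^{1/p_1}\left(\E\exp(p_2T^p\xi_{>\e})\right)^{1/p_2}$ to $M_X(T)\le\E\exp(p_2T^p\xi_{>\e})$ requires dividing by $M_X(T)^{1/p_1}$, hence requires $M_X(T)<\infty$, which is not known a priori in this branch (only $M_Y$ is assumed finite, and the easy direction gives $M_Y\le M_X$, the wrong way). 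This is repairable --- from $M_Y(s)\ge C_\e\,\E\exp(s^p\xi_{>\e})$ one gets $\E\exp(\lambda\xi_{>\e})<\infty$ for all $\lambda>0$, and then the tail splitting $\mathbb P(\xi>u)\le\mathbb P(\xi>\e^{-p}u/2)+\mathbb P(\xi_{>\e}>u/2)$, iterated along the geometric sequence $(\e^{-p}/2)^j u$, yields superexponential decay of $\mathbb P(\xi>u)$ and hence $M_X(T)<\infty$ --- but as written the division step is circular and should be justified.
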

\begin{proof}
Without loss of generality, we assume $\lim\limits_{t\to \infty}\frac1t \log \E\exp\left(\int_0^t \gamma(X_s)ds\right)$ exists.

 It follows from  Remark \ref{rem-max-principle} that $$\E\left(\int_0^t \gamma(Y_s)ds\right)^n\le \E\left(\int_0^t \gamma(X_s)ds\right)^n, $$
and hence
$$\limsup_{t\to \infty}\frac1t \log \E\exp\left(\int_0^t \gamma(Y_s)ds\right)\le \lim_{t\to \infty}\frac1t \log \E\exp\left(\int_0^t \gamma(X_s)ds\right). $$

To get the desired result, we shall prove the opposite direction of the above inequality with $\limsup$ replaced by $\liminf$. Fixing an arbitrary $\e\in(0,1),$ and denoting $Y^\e=X+\eta^\e$, by Lemma \ref{max-principle}, Minkowski's inequality and the scaling property $\int_0^a \gamma (X_s) ds\overset{d}= a^{1-\alpha\beta} \int_0^1 \gamma (X_s)ds$,  we have
\begin{align}\label{eq3.2}
\E\left(\int_0^1 \gamma(Y_s)ds\right)^n& \ge  \E\left(\int_\e^1 \gamma(Y_s^\e)ds\right)^n \notag\\
&\ge \E\exp\left(-\frac12 \|\eta^\e\|^2_{\bh(X) } \right) \E\left(\int_\e^1 \gamma(X_s)ds\right)^n \notag
\\
&= A_\e~ \E\left(\int_0^1 \gamma(X_s)ds-\int_0^\e \gamma(X_s)ds\right)^n\notag\\
&\ge A_\e~\left(\left(\E\left(\int_0^1 \gamma(X_s)ds\right)^n\right)^{1/n}-\left(\E\left(\int_0^\e \gamma(X_s)ds\right)^n\right)^{1/n}\right)^n\notag\\
&=  A_\e~\left( 1-\e^{1-\alpha \beta}\right)^n \E\left(\int_0^1 \gamma(X_s)ds\right)^n,
\end{align}
where $ \|\eta^\e \|_{\bh(X)}<\infty$ a.s. is the norm endowed in the RKHS of $\{ X_s, s\in[0,1] \}$ and $A_\e=\E\exp\left(-\frac12 \|\eta^\e\|^2_{\bh(X)} \right)\in(0,1]$ is independent of $n$.

 Thus, by the scaling property for the functional of $Y$ and  \eqref{eq3.2}, we obtain
\begin{align}\label{eq4.3}
\E\exp\left(\int_0^{t}\gamma(Y_s)ds\right) &=\sum_{n\ge0} \frac1{n!}t^{n(1-\alpha \beta)} \E\left(\int_0^{1}\gamma(Y_s)ds\right)^n\notag\\
&\ge A_\e \sum_{n\ge0}\frac1{n!}  \left(t^{1-\alpha \beta}(1-\e^{1-\alpha \beta})\right)^n \E\left(\int_0^{1}\gamma(X_s)ds\right)^n\notag\\
&=A_\e\,\E\exp\left(t^{1-\alpha \beta}(1-\e^{1-\alpha \beta})\int_0^{1}\gamma(X_s)ds\right).
\end{align}

For random variables $F$, $G$ with $\E e^{\theta F}<\infty$ and $\E e^{\theta G}<\infty$ for all $\theta>0,$  H\"older's inequality yields 
$$ \log\E e^{F-G}\ge p\log \E e^{\frac Fp} -\frac pq \log \E e^{\frac qp G}, $$
where $p,q >1$ and $1/p+1/q=1.$ Applying this inequality to the right-hand side of \eqref{eq4.3} and using the scaling property $\int_0^a \gamma (X_s) ds\overset{d}= a^{1-\alpha\beta} \int_0^1 \gamma (X_s)ds$, we get 
\begin{align*}
&\log\E\exp\left(\int_0^{t}\gamma(Y_s)ds\right)\\
\ge& \log A_\e + p\log \E\exp\left(p^{-1} t^{1-\alpha \beta}\int_0^1 \gamma(X_s)ds\right)-\frac pq\log \E\exp\left(p^{-1}q (\e t)^{1-\alpha \beta}\int_0^1 \gamma(X_s)ds\right)\\
\ge& \log A_\e + p\log \E\exp\left(\int_0^{t\,p^{-(1-\alpha \beta)^{-1}}} \gamma(X_s)ds\right)-\frac pq\log \E\exp\left(\int_0^{\e t\,(p^{-1}q)^{(1-\alpha \beta)^{-1}}} \gamma(X_s)ds\right), 
\end{align*}
and hence 
\begin{align*}
&\liminf_{t\to \infty}\frac1t\log\E\exp\left(\int_0^{t}\gamma(Y_s)ds\right)\\
\ge& \lim_{t\to\infty} \frac pt \log \E\exp\left(\int_0^{t\,p^{-(1-\alpha \beta)^{-1}}} \gamma(X_s)ds\right)-\lim_{t\to\infty}  \frac p{qt}\log \E\exp\left(\int_0^{\e t\,(p^{-1}q)^{(1-\alpha \beta)^{-1}}} \gamma(X_s)ds\right)\\
=&\left( p^{1-(1-\alpha \beta)^{-1}} -\e (pq^{-1})^{1-(1-\alpha \beta)^{-1}}\right)  \lim_{t\to\infty} \frac 1t \log \E\exp\left(\int_0^{t} \gamma(X_s)ds\right).
\end{align*}
Since $\e >0$ can be arbitrarily small and $p$ can be arbitrarily close to $1$,  we obtain
\begin{align*}
\liminf\limits_{t\to \infty}\frac1t\log\E\exp\left(\int_0^{t}\gamma(Y_s)ds\right)
&\ge \lim\limits_{t\to \infty}\frac1t\log\E\exp\left(\int_0^{t}\gamma(X_s)ds\right).
\end{align*}
The proof is completed.
\hfill
\end{proof}
\begin{remark}
It is obvious that  if condition (iv) is replaced by
\begin{itemize}
\item[(iv')] $\{\eta_t,  t\in[0, 1]\}$ belongs to the RKHS of $\{X_t, t\in[0,1]\}$ almost surely, 
\end{itemize}
the result of Proposition \ref{prop-ld} still holds. 
\end{remark}

\section{Large deviations for the functionals of Riemann-Liouville process}

In this section, we let $X^\alpha=\{X^\alpha_t, t\geq 0\}$ be the $d$-dimensional Riemann-Liouville process with parameter $\alpha\in (0,1)$, i.e.,  $X^\alpha_t=\int_0^t (t-s)^{\alpha-\frac12}dW_s$, where $\{W_t,t\geq 0\}$ is a $d$-dimensional Brownian motion. This section is devoted to deriving the large deviations for $\int_0^t \gamma(X_s^\alpha)ds$, where  $\gamma$ is the functional given in Section \ref{section2}.  

\begin{proposition}\label{prop-sub-additive}
 Suppose $X_t=\int_0^t K(t-s) dW_s$, where $K(s):\mathbb{R}^+\to \mathbb{R}^d$ is a measurable function such that $\int_0^T |K(s)|^2ds<\infty$ for all $T>0$. Let $\gamma$ be a tempered distribution on $\R^d$ with its Fourier transform $\nu(dx)$ being a non-negative measure on $\R^d$, i.e., $\gamma$ is a non-negative definite distribution. Then $\log \frac1{m!}\E \left(\int_0^\tau \gamma(X_s)ds \right)^m$ is sub-additive in $m$, where $\tau$ is an exponential time with parameter 1 independent of $X$.
\end{proposition}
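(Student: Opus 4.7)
The plan is to show the stronger inequality
\begin{equation*}
\frac{1}{(m+n)!}\E\Bigl(\int_0^\tau \gamma(X_s)ds\Bigr)^{m+n} \le \frac{1}{m!}\E\Bigl(\int_0^\tau \gamma(X_s)ds\Bigr)^{m}\cdot \frac{1}{n!}\E\Bigl(\int_0^\tau \gamma(X_s)ds\Bigr)^{n},
\end{equation*}
from which the claim follows by taking logarithms. First I would symmetrize and integrate out $\tau$: using $P(\tau>t)=e^{-t}$, I rewrite
\begin{equation*}
\frac{1}{m!}\E\Bigl(\int_0^\tau \gamma(X_s)ds\Bigr)^{m}=\int_{0<s_1<\cdots<s_m} e^{-s_m}\,\E\bigl[\gamma(X_{s_1})\cdots\gamma(X_{s_m})\bigr]\,ds_1\cdots ds_m,
\end{equation*}
and similarly for the exponent $m+n$. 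The factor $e^{-s_{m+n}}$ in the latter factorizes as $e^{-s_m}\,e^{-(s_{m+n}-s_m)}$, which is exactly why the exponential random time is used.

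The core of the argument is the following decomposition of the process. For any $s_m\ge 0$ and $t>s_m$, split the Wiener integral into the contribution from $[0,s_m]$ and that from $[s_m,t]$:
\begin{equation*}
X_t=\int_0^{s_m} K(t-u)\,dW_u+\int_{s_m}^{t}K(t-u)\,dW_u=:Y_t+\widetilde X_{t-s_m},
\end{equation*}
where, after the change of variable $v=u-s_m$, the second term equals $\int_0^{t-s_m}K(t-s_m-v)\,d\widetilde W_v$ with $\widetilde W_v=W_{s_m+v}-W_{s_m}$. Thus $\{Y_t\}_{t\ge s_m}$ is $\mathcal F_{s_m}^W$-measurable, while $\{\widetilde X_r\}_{r\ge 0}$ is independent of $\mathcal F_{s_m}^W$ and has the same law as $\{X_r\}_{r\ge 0}$. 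Conditioning on $\mathcal F_{s_m}^W$, the vector $(X_{s_{m+1}},\dots,X_{s_{m+n}})$ is therefore the centered Gaussian vector $(\widetilde X_{s_{m+1}-s_m},\dots,\widetilde X_{s_{m+n}-s_m})$ shifted by the deterministic (conditionally) vector $(Y_{s_{m+1}},\dots,Y_{s_{m+n}})$. Invoking Remark \ref{rem-max-principle} applied to the non-negative definite $\gamma$ removes the shift and yields
\begin{equation*}
\E\bigl[\gamma(X_{s_{m+1}})\cdots\gamma(X_{s_{m+n}})\,\big|\,\mathcal F_{s_m}^W\bigr]\le \E\bigl[\gamma(\widetilde X_{s_{m+1}-s_m})\cdots\gamma(\widetilde X_{s_{m+n}-s_m})\bigr],
\end{equation*}
and the right-hand side is deterministic and equal to $\E[\gamma(X_{s_{m+1}-s_m})\cdots\gamma(X_{s_{m+n}-s_m})]$. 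Taking expectation and plugging into the $(m+n)$-fold integral, the change of variables $s'_{m+i}=s_{m+i}-s_m$ ($i=1,\dots,n$) factors the integral into the product of the $m$-th and $n$-th ordered integrals, giving the desired sub-multiplicative bound.

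The main obstacle is that $\gamma$ is only a tempered distribution, so the pointwise products $\gamma(X_{s_1})\cdots\gamma(X_{s_k})$ and the applications of Remark \ref{rem-max-principle} must be justified by approximation. The natural remedy is to replace $\gamma$ with the mollified $\gamma_\varepsilon:=\gamma\ast p_\varepsilon$ (so that $\widehat{\gamma_\varepsilon}(d\xi)=e^{-\varepsilon|\xi|^2/2}\nu(d\xi)$ remains a non-negative measure and $\gamma_\varepsilon$ is a bona fide non-negative definite Schwartz function with the required symmetry), carry out the entire argument with $\gamma_\varepsilon$, and then let $\varepsilon\downarrow 0$ using the Fourier representation already exploited in the proof of Proposition \ref{lemma-lt} together with monotone/dominated convergence to pass both sides of the inequality to the limit. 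Everything else, including the memoryless factorization of $e^{-s_m-s'_{m+n}}$ and the decomposition of $X_t$, is soft and should go through routinely.
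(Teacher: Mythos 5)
Your proposal is correct and follows essentially the same route as the paper: the same expansion of the moment over the ordered simplex with the memoryless factorization $e^{-s_{m+n}}=e^{-s_m}e^{-(s_{m+n}-s_m)}$, the same splitting of the Wiener integral at $s_m$ into an $\mathcal F_{s_m}$-measurable shift plus an independent copy of the process, and the same appeal to Remark \ref{rem-max-principle} to discard the shift before changing variables. The only addition is your mollification remark, which is a reasonable precaution the paper leaves implicit.
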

\begin{proof}
Denote $[0,t]_{<}^m=[0<s_1<s_2<\dots<s_m <t]$ and $\R_{+,<}^m=[0<s_1<s_2<\dots<s_m<\infty)$. Notice that
\begin{align*}
\frac1{m!}\E \left(\int_0^\tau \gamma(X_s)ds \right)^m &=\frac1{m!} \int_0^\infty e^{-s} \E \left(\int_0^{s} \gamma(X_u)du \right)^m ds \\
&=\int_0^\infty e^{-s} \int_{[0,s]_<^m}   \E\left[\prod_{k=1}^m \gamma(X_{s_{k}})  \right] ds_1\dots ds_m ds\\
&=\int_{\R_{+,<}^m}e^{-s_m} \E\left[\prod_{k=1}^m \gamma(X_{s_{k}})  \right] ds_1\dots ds_m.
\end{align*}
Therefore, 
\begin{align}
 &\frac1{(m+n)!}\E \left(\int_0^\tau \gamma(X_s)ds \right)^{m+n} \notag\\
 =& \ \int_{\R_{+,<}^{m+n}}e^{-s_{m+n}} \E\left[\prod_{k=1}^{m+n} \gamma(X_{s_{k}})  \right] ds_1\dots ds_{m+n}\notag \\
 =&\ \int_{\R_{+,<}^{m+n}} e^{-s_{m}} e^{-(s_{m+n}-s_m)} \E \left[\prod_{k=1}^{m}  \gamma(X_{s_{k}})\E\left[\prod_{k=m+1}^{m+n}  \gamma(X_{s_{k}})\Big| \mathcal F_{s_m}\right]  \right] ds_1\dots ds_{m+n}.\label{eq4.4}
 \end{align}
 For $k=m+1,\dots, m+n$, let $X_{s_k}=\int_0^{s_{k}} K(s_k-s) dW_s= A_{s_m, s_k}+ Y_{s_m, s_k}$, where $A_{s_m,s_k}=\int_0^{s_m} K(s_k-s)dW_s$ and $Y_{s_m, s_k}=\int_{s_m}^{s_k} K(s_k-s)dW_s$. Furthermore, note that 
 \begin{align}\label{eq-distr}
 (Y_{s_m, s_{m+1}}, \dots, Y_{s_m, s_{m+n}})\overset{d}= (X_{s_{m+1}-s_m}, \dots, X_{s_{m+n}-s_m}).
 \end{align}  Hence, by the fact that  $A_{s_m,s_k}\in \mathcal F_{s_m}$, \eqref{eq-distr} and Remark \ref{rem-max-principle}, we have
 \begin{align*}
 \E\left[\prod_{k=m+1}^{m+n}  \gamma(X_{s_{k}})\Big| \mathcal F_{s_m}\right] \le \E\left[\prod_{k=m+1}^{m+n}  \gamma(X_{s_k-s_m})\right].
 \end{align*}
 Thus, it follows from \eqref{eq4.4} and a change of variables,
 \begin{align*}
 &\frac1{(m+n)!}\E \left(\int_0^\tau \gamma(X_s)ds \right)^{m+n}\\
 \le&\ \int_{\R_{+,<}^{m+n}} e^{-s_{m}} e^{-(s_{m+n}-s_m)} \E \left[\prod_{k=1}^{m}  \gamma(X_{s_{k}})\right]  \E\left[\prod_{k=m+1}^{m+n}  \gamma(X_{s_k-s_m})\right] ds_1\dots ds_{m+n}\\
 \le&\ \int_{\R_{+,<}^{m}} e^{-s_{m}}  \E \left[\prod_{k=1}^{m}  \gamma(X_{s_{k}})\right]   ds_1\dots ds_{m} \int_{\R_{+,<}^{n}} e^{-s_{n}}\E \left[\prod_{k=1}^{n}  \gamma(X_{s_{k}})\right]ds_1\dots ds_{n}\\
 =&\ \frac1{m!}\E \left(\int_0^\tau \gamma(X_s)ds \right)^{m}\frac1{n!}\E \left(\int_0^\tau \gamma(X_s)ds \right)^{n}.
 \end{align*}
The proof is completed.\hfill
\end{proof}

\begin{proposition}\label{prop-3.2}
Let $\gamma(x)$ be given in Section \ref{section2}. Then for all $\theta>0,$  $$\lim\limits_{t\to\infty}\frac1t\log \E \exp\left(\theta\int_0^t \gamma(X^\alpha_s)ds \right)=\mathcal E(\gamma, \alpha, \beta)\,\theta^{\frac1{1-\alpha\beta}},$$
where $\mathcal E(\gamma, \alpha, \beta)$ is a positive constant depending on $(\gamma, \alpha, \beta)$.
\end{proposition}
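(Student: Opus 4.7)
The plan is to reduce the limit to a statement about the moments $b_m := \E\bigl(\int_0^1 \gamma(X_s^\alpha)\,ds\bigr)^m$ via a Taylor-series-plus-Laplace argument, where the needed moment asymptotics come from the subadditivity in Proposition \ref{prop-sub-additive}.

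First, using the self-similarity $X^\alpha_{at}\overset{d}{=}a^\alpha X^\alpha_t$ together with the homogeneity $\gamma(ax)=a^{-\beta}\gamma(x)$, a scaling $s=tu$ gives the distributional identity $\int_0^t \gamma(X_s^\alpha)\,ds \overset{d}{=} t^{1-\alpha\beta}\int_0^1\gamma(X_u^\alpha)\,du$, so the object of interest is
\[
\E\exp\!\left(\theta t^{1-\alpha\beta}\int_0^1\gamma(X_u^\alpha)\,du\right)=\sum_{m\ge 0}\frac{(\theta t^{1-\alpha\beta})^m}{m!}\,b_m.
\]
All three choices of $\gamma$ are non-negative definite distributions (for $\delta$ the Fourier transform is constant; for $|x|^{-\beta}$ it is a positive multiple of $|\xi|^{-(d-\beta)}$; the product case is a tensor product of these), so Proposition \ref{prop-sub-additive} applies with kernel $K(u)=u^{\alpha-1/2}$. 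This yields that $c_m:=\log\frac{1}{m!}\E\bigl(\int_0^\tau\gamma(X_s^\alpha)\,ds\bigr)^m$ is subadditive in $m$.

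Next I would convert this into information on $b_m$. Conditioning on the exponential time $\tau$ and using scaling,
\[
\E\left(\int_0^\tau \gamma(X_s^\alpha)\,ds\right)^m=\E[\tau^{m(1-\alpha\beta)}]\,b_m=\Gamma\bigl(m(1-\alpha\beta)+1\bigr)\,b_m.
\]
By Fekete's lemma, $c_m/m$ converges to some limit in $[-\infty,\infty)$, and combining with Stirling's formula translates this to the existence of $K\in[0,\infty]$ with
\[
\lim_{m\to\infty} m^{-\alpha\beta}\,b_m^{1/m}=K.
\]
The finiteness $K<\infty$ requires an \emph{a priori} upper bound on $b_m$; this is obtained from the local nondeterminism of $X^\alpha$ (Remark \ref{rmk-RL}(i)-(ii)) via the same Fourier-analytic computation used in Proposition \ref{lemma-lt}, which reduces $b_m$ to a multiple integral of $(\det Q)^{-d/2}$ over an $m$-simplex and yields a bound polynomial in $m^{m\alpha\beta}$. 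Positivity $K>0$ follows by bounding $b_m$ from below via Jensen, or by restricting $X_s^\alpha$ to remain in a small ball of positive probability.

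Finally, with $b_m=(Km^{\alpha\beta})^m e^{o(m)}$, a routine Laplace argument on the Taylor series locates the dominant term near
\[
m^*\sim (\theta K)^{1/(1-\alpha\beta)}\,t,
\]
and substituting yields $\log(\text{summand})_{m^*}\sim (1-\alpha\beta)(\theta K)^{1/(1-\alpha\beta)}\,t$, with polynomial-in-$t$ corrections from terms away from $m^*$ and from the normalizing factor $\sqrt{2\pi m^*}$ in Stirling. Dividing by $t$ and passing to the limit gives
\[
\lim_{t\to\infty}\frac1t\log\E\exp\!\left(\theta\int_0^t\gamma(X_s^\alpha)\,ds\right) = (1-\alpha\beta)\,K^{1/(1-\alpha\beta)}\,\theta^{1/(1-\alpha\beta)},
\]
so $\mathcal E(\gamma,\alpha,\beta):=(1-\alpha\beta)K^{1/(1-\alpha\beta)}>0$. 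The principal obstacle is establishing $0<K<\infty$: the upper bound, in particular, must handle the singularity of $\gamma$, and the computation has to be carried out uniformly in $m$, while the lower bound must beat the combinatorial factor $m!$ present in $c_m$. Everything else (subadditivity, Fekete, and the Laplace-type evaluation of the power series) is a routine packaging of the moment estimate.
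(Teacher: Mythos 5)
Your overall skeleton (scaling, subadditivity of $\log\frac1{m!}\E(\int_0^\tau\gamma)^m$, Fekete, conversion of the moment growth rate into the exponential asymptotics) matches the paper's, and replacing the paper's final steps (the K\"onig--M\"orters moments-to-tails lemma followed by Varadhan) with a direct Laplace analysis of the power series is a legitimate shortcut. Note also that $K<\infty$ is not really where work is needed: subadditivity already gives $c_m\le m c_1$, so once the \emph{first} moment $\E\int_0^\tau\gamma(X^\alpha_s)\,ds$ is finite (easy from $\alpha\beta<1$, and from local nondeterminism when $\gamma=\delta$), the upper bound $b_m\le C^m m!/\Gamma(1+(1-\alpha\beta)m)\sim C^m(m!)^{\alpha\beta}$ is automatic and uniform in $m$ for free.

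The genuine gap is in your proof of $K>0$, which you correctly identify as the crux but then dispatch with two suggestions, one of which fails outright and the other of which fails for one of the three cases of $\gamma$. Jensen gives only $b_m\ge(\E\int_0^1\gamma)^m=c^m$, and $c^m/m^{\alpha\beta m}\to0$, so it yields $K\ge0$, not $K>0$; it cannot ``beat the factorial.'' The small-ball argument does work for $\gamma(x)=|x|^{-\beta}$ (and the product case): on $S_\e=\{\sup_{s\le1}|X^\alpha_s|\le\sqrt d\,\e\}$ one has $\int_0^1|X^\alpha_s|^{-\beta}ds\ge c_d\e^{-\beta}$ pointwise, and $\mathbb P(S_\e)\ge e^{-c_0d\e^{-1/\alpha}}$, so optimizing $\e\sim m^{-\alpha}$ gives $b_m\ge(c m^{\alpha\beta})^m e^{-c_0dm}$, which is exactly the required growth (this is in fact cleaner than the paper's contradiction argument via its Lemma \ref{lemma8} and Chen's Theorem 1.2.9). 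But for $\gamma=\delta$ the small-ball event gives no pointwise lower bound on the local time \emph{at the origin}: the occupation density formula only forces $\sup_{|x|\le\sqrt d\e}L_1^x\ge c\e^{-d}$, not $L_1^0\ge c\e^{-d}$. For the delta case you must instead argue as the paper does: write $\frac1{m!}\E(\int_0^1\delta(X^\alpha_s)ds)^m$ as $\int_{[0,1]^m_<}(2\pi\det\mathrm{Cov}(X^{\alpha,1}_{s_1},\dots,X^{\alpha,1}_{s_m}))^{-d/2}ds$, bound the determinant \emph{above} by $C^ms_1^{2\alpha}\prod(s_i-s_{i-1})^{2\alpha}$ using the conditional-variance factorization and Remark \ref{rmk-RL}(iii), and evaluate the resulting Dirichlet integral as $C^m\Gamma^m(1-\alpha d)/\Gamma(1+(1-\alpha d)m)$, which delivers the $m^{\alpha dm}$ lower bound. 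Without this (or some substitute), your proof covers only two of the three functionals. A final minor point: your closed form $\mathcal E=(1-\alpha\beta)K^{1/(1-\alpha\beta)}$ drops a factor $e^{\alpha\beta/(1-\alpha\beta)}$ from the saddle-point evaluation, though this does not affect the positivity claim.
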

\begin{proof}
Let $\tau$ be an exponential time with parameter 1 which is independent of $X$. By Proposition \ref{prop-sub-additive} and Fekete's lemma, we know that there exists an extended number  $A\in [-\infty, \infty)$, such that 
\begin{align}
A:=&\ \lim_{m\to\infty}\frac1m \log\left(\frac1{m!} \E\left(\int_0^\tau \gamma(X^\alpha_s)ds\right)^m\right)\notag\\
 =&\ \inf_m\left\{ \frac1m \log\left(\frac1{m!} \E\left(\int_0^\tau \gamma(X^\alpha_s)ds\right)^m\right)\right\}. \label{eq3.3}
\end{align}
By the scaling property and the independence of $\tau$ and $X$, we have 
\begin{align}\label{eq-scaling}
&\frac1m \log\left(\frac1{m!} \E\left(\int_0^\tau \gamma(X^\alpha_s)ds\right)^m\right)=\frac1m \log\left(\E[\tau^{(1-\alpha\beta)m}] \frac{1}{m!} \E\left(\int_0^1 \gamma(X^\alpha_s)ds\right)^m\right).
\end{align}

First we show that $A$ is a real number. Noting that for $x\in \R^d$, $|x|^{-(\beta_1+\cdots+\beta_d)}\le |x_1|^{-\beta_1}\cdots|x_d|^{-\beta_d}$, we only need to show $A>-\infty$ for the cases $\gamma(x)=\delta(x)$ and $\gamma(x)=|x|^{-\beta}$. 

For the case $\gamma(x)=\delta(x)$, we have
\begin{align}
&\frac{1}{m!} \E\left(\int_0^1 \gamma(X^\alpha_s)ds\right)^m\notag\\
=&\ \int_{[0,1]_<^m} \E\left[\prod_{i=1}^m\gamma(X^\alpha_{s_i})\right]ds_1\dots ds_m\notag\\
=&\ \int_{[0,1]_<^m} \int_{\R^{md}}\prod_{i=1}^m \widehat\gamma(\xi_i) \exp\left(-\frac12 \Var\left(\sum_{i=1}^m \xi_i \cdot X^\alpha_{s_i}\right)\right)d\xi_1\dots d\xi_m ds_1\dots ds_m.\label{eq-m}
\end{align}
Note (see, e.g., \cite{Berman} or \cite[Lemma 3.8]{clrs}) that, for $0< s_1<s_2<\dots<s_n<1$,
\begin{align}\label{eq-cov}
 &\text{det}[\text{Cov}(X^{\alpha,1}_{s_1},\dots, X^{\alpha,1}_{s_m})]\notag\\
 =&\ \Var(X^{\alpha,1}_{s_1})\Var(X^{\alpha,1}_{s_2}|X^{\alpha,1}_{s_1}) \cdots \Var(X^{\alpha,1}_{s_m}|X^{\alpha,1}_{s_1},\dots, X^{\alpha,1}_{s_{m-1}})\notag\\
=&\ \Var(X^{\alpha,1}_{s_1})\Var(X^{\alpha,1}_{s_2}-X^{\alpha,1}_{s_1}|X^{\alpha,1}_{s_1}) \cdots \Var(X^{\alpha,1}_{s_m}-X^{\alpha,1}_{s_{m-1}}|X^{\alpha,1}_{s_1},\dots, X^{\alpha,1}_{s_{m-1}})\notag\\
\le&\  \Var(X^{\alpha,1}_{s_1})\Var(X^{\alpha,1}_{s_2}-X^{\alpha,1}_{s_1}) \cdots \Var(X^{\alpha,1}_{s_m}-X^{\alpha,1}_{s_{m-1}})\notag\\
\leq &\ C^ms_1^{2\alpha}(s_2-s_1)^{2\alpha}\dots (s_m-s_{m-1})^{2\alpha},
 \end{align}
 where $X^{\alpha,1}$ is the first component of the vector $X$ and $C$ here and  in the following denotes a generic positive constant independent of $m$ which may vary from line to line.

When $\gamma(x)=\delta(x)$, $\widehat\gamma(\xi)=1$, and together with \eqref{eq-cov}, the equation \eqref{eq-m} equals
\begin{align*}
&\int_{[0,1]_<^m} \int_{\R^{md}}\exp\left(-\frac12 \Var\left(\sum_{i=1}^m \xi_i\cdot X^\alpha_{s_i}\right)\right)d\xi_1\dots d\xi_m ds_1\dots ds_m\\
=&\  \int_{[0,1]_<^m} \Big(2\pi \text{det}[\text{Cov}(X^{\alpha,1}_{s_1},\dots, X^{\alpha,1}_{s_m})]\Big)^{-d/2} ds_1\dots ds_m\\
\ge&\  C^m \int_{[0,1]_<^m}  s_1 ^{-\alpha d}(s_2-s_1)^{-\alpha d}\cdots (s_{m}-s_{m-1})^{-\alpha d} ds_1\dots ds_m\\
=&\ C^m \frac{\Gamma^m(1-\alpha d)}{\Gamma(1+(1-\alpha d)m)}.
\end{align*}
The above inequality, the fact $\E[\tau^{(1-\alpha d)m}]=\Gamma(1+(1-\alpha d)m)$ with $\Gamma(\cdot)$ being the Gamma function, \eqref{eq3.3} and \eqref{eq-scaling} imply that $A\ge \log \Gamma(1-\alpha d)+\log C>-\infty$.

 Now we show that $A>-\infty$ when $\gamma(x)=|x|^{-\beta}.$  Assume instead that $A=-\infty$, then by \eqref{eq-scaling}, the fact $\E[\tau^{(1-\alpha \beta)m}]=\Gamma(1+(1-\alpha \beta)m)$, and the Stirling formulas $\Gamma(1+x)\sim \sqrt{2\pi x}\left(\frac xe\right)^x$ and $m!\sim \sqrt{2\pi m}\left(\frac me\right)^m$,  we have
 \begin{equation}\label{eq-4.9}
  \lim_{m\to\infty}\frac1m \log\left(\frac1{(m!)^{\alpha \beta}} \E\left(\int_0^1 \gamma(X^\alpha_s)ds\right)^m\right)=-\infty.
  \end{equation}
 Thus Lemma \ref{lemma10} (i) implies that 
 \begin{equation}\label{song-10-14}
 \limsup_{u\to\infty}\frac1{u^{1/\alpha\beta}}\log \mathbb P\left(\int_0^1 \gamma (X^\alpha_s)ds\ge u\right)=-\infty.
 \end{equation}
For any $\lambda>0$, by the scaling property $\int_0^t \gamma(X^\alpha_s)ds\overset d= t^{1-\alpha\beta}\int_0^1 \gamma(X^\alpha_s)ds$, and  by a change of variables $u=\lambda t^{\alpha\beta}$, we have
 \begin{align}\label{eq-4.10}
 \limsup_{t\to\infty}\frac1t \log \mathbb P\left( \frac 1t \int_0^t \gamma (X^\alpha_s)ds\ge \lambda \right)
 =& \limsup_{t\to\infty}\frac1t \log \mathbb P\left( \int_0^1 \gamma (X^\alpha_s)ds\ge \lambda t^{\alpha\beta} \right)\notag\\
 =& \limsup_{u\to\infty}\frac{\lambda^{1/\alpha\beta}}{u^{1/\alpha\beta}}\log \mathbb P\left(\int_0^1 \gamma (X^\alpha_s)ds\ge u\right)=-\infty.
 \end{align}
Now take $I(\lambda)=\lambda$ for all $\lambda\geq 0$. Then, $I(\lambda)$ is a non-decreasing rate function on $\mathbb{R}^+$ with $I(0)=0$, and \eqref{eq-4.10} yields
\begin{equation}\label{song-3-10}
\limsup_{t\to\infty}\frac1t \log \mathbb P\left( \frac 1t \int_0^t \gamma (X^\alpha_s)ds\ge \lambda \right)\leq -I(\lambda), \mbox{ for all } \lambda>0.
\end{equation}
 Note  that \eqref{song-10-14} and Lemma \ref{lem-3-3} with $p=1/\alpha\beta$ imply 
 $$\E\exp\left(\rho\int_0^1 \gamma(X^\alpha_s)ds\right)<\infty,\ \mbox{ for all }\rho>0,$$
 and 
 \[
 \limsup\limits_{\rho\to \infty} \rho^{-\frac{1}{1-\alpha\beta}}\log  \mathbb{E}\, \exp\left(\rho\int_0^1 \gamma(X^\alpha_s)ds\right)<\infty.
 \]
Hence, by the scaling property $\int_0^t \gamma(X^\alpha_s)ds\overset d= t^{1-\alpha\beta}\int_0^1 \gamma(X^\alpha_s)ds$ and a change of variables $\rho=\theta t^{1-\alpha\beta}$ for any fixed $\theta>0$, we can obtain
 \begin{align}\label{eq-uni-exp}
& \limsup\limits_{t\to\infty}\frac1t\log \E\exp\left(\theta\int_0^t \gamma(X^\alpha_s)ds\right)\notag\\
=&\ \limsup\limits_{t\to\infty}\frac1t\log \E\exp\left(\theta t^{1-\alpha\beta}\int_0^1 \gamma(X^\alpha_s)ds\right)\notag\\
=&\ \theta^{\frac{1}{1-\alpha\beta}} \limsup\limits_{\rho\to\infty} \rho^{-\frac{1}{1-\alpha\beta}}\log  \mathbb{E}\, \exp\left(\rho\int_0^1 \gamma(X^\alpha_s)ds\right)<\infty.
 \end{align}
Now, \eqref{eq-uni-exp} implies that (1.2.29) with $p=1$ in \cite[ Lemma 1.2.10]{chen} holds. Then, Lemma 1.2.10 in \cite{chen} implies that the assumption (1.2.26) in \cite{chen} holds. Together with \eqref{song-3-10}, we apply Theorem 1.2.9(2) with $p=1$ in \cite{chen} to obtain
 \begin{align*}
 \limsup_{t\to\infty}\frac1t \log \E\exp\left(\int_0^t \gamma(X^\alpha_s)ds\right)=&\ \limsup_{t\to\infty}\frac1t \log \left\{\sum_{m=0}^\infty\frac{1}{m!}\E\left(\int_0^t \gamma(X^\alpha_s)ds\right)^m\right\}\\
 \leq&\ \sup_{\lambda>0}\left\{\lambda-I(\lambda)\right\}=\sup_{\lambda>0}\{0\}=0.
 \end{align*}
 which contradicts  Lemma \ref{lemma8}. Therefore, $A>-\infty$ when $\gamma(x)=|x|^{-\beta}$.

 Since $A$ in \eqref{eq3.3} is a real number,  by \eqref{eq-scaling} and the Stirling formula, there exists $a\in (-\infty, \infty)$ (depending on the function $\gamma(x)$) such that 
  \begin{equation*}
  \lim_{m\to\infty}\frac1m \log\left(\frac1{(m!)^{\alpha \beta}} \E\left(\int_0^1 \gamma(X^\alpha_s)ds\right)^m\right)=a.
  \end{equation*}
  It implies from Lemma \ref{lemma10} (ii) that
  \begin{equation}\label{eq-ab}
  \lim\limits_{u\to\infty} \frac1{u^{1/\alpha\beta}}\log \mathbb P\left(\int_0^1 \gamma (X^\alpha_s)ds\ge u\right)=-\alpha\beta e^{-a/\alpha\beta}<0.
  \end{equation}
Hence, by  the scaling property $\int_0^t \gamma(X^\alpha_s)ds\overset d= t^{1-\alpha\beta}\int_0^1 \gamma(X^\alpha_s)ds$ and a change of variables, we have for all $\lambda>0$,
\begin{align*}
 \lim_{t\to\infty} \frac1t \log \mathbb P\left(\frac1t\int_0^t \gamma(X^\alpha_s)ds \ge \lambda \right)=&\ \lim_{t\to\infty} \frac1t \log \mathbb P\left(\frac1t \cdot t^{1-\alpha\beta}\int_0^1 \gamma(X^\alpha_s)ds \ge \lambda \right)\\
 =&\ \lim_{t\to\infty} \frac1t \log \mathbb P\left(\int_0^1 \gamma(X^\alpha_s)ds \ge \lambda t^{\alpha\beta} \right)\\
 =&\ \lambda^{1/\alpha\beta}\lim_{u\to\infty} \frac1{u^{1/\alpha\beta}}\log \mathbb P\left(\int_0^1 \gamma(X^\alpha_s)ds \ge u \right)\\
=&\  -\alpha\beta e^{-a/\alpha\beta} \lambda^{1/\alpha\beta}.
 \end{align*}
Notice that the scaling property $\int_0^t \gamma(X^\alpha_s)ds\overset d= t^{1-\alpha\beta}\int_0^1 \gamma(X^\alpha_s)ds$,  a change of variables, \eqref{eq-ab} and Lemma \ref{lem-3-3} with $p=1/\alpha\beta$ imply that, for all $\theta>0$, 
\begin{align*}
&\limsup\limits_{t\to\infty}\frac1t\log \E\exp\left(\theta\int_0^t \gamma(X^\alpha_s)ds\right)\\
=&\ \limsup\limits_{t\to\infty}\frac1t\log \E\exp\left(\theta t^{1-\alpha\beta}\int_0^1\gamma(X^\alpha_s)ds\right)\\
=&\ \theta^{\frac1{1-\alpha\beta}} \limsup\limits_{\rho\to\infty}\rho^{-\frac{1}{1-\alpha\beta}}\log\E\exp\left(\rho\int_0^1\gamma(X^\alpha_s)ds\right)<\infty.
\end{align*}
Hence, by Theorem \ref{equiv} and Varadhan's integral lemma (see Lemma \ref{lem-vil}), we have for $\theta>0$,
$$ \lim_{t\to\infty} \frac1t \log \E\exp\left(\theta\int_0^t \gamma(X^\alpha_s)ds \right)= \mathcal E(\gamma, \alpha, \beta) \,\theta^{\frac1{1-\alpha\beta}}.$$
The proof is concluded.\hfill
\end{proof}

The remaining part of this section consists of lemmas that were used in the previous proof. 
 
\begin{lemma}\label{lem-3-3}
Let $Y$ be a nonnegative random variable, and let $b\in(0,\infty]$ and $p>1$ be given. If $\lim\limits_{t\to\infty}\frac{1}{t^p}\log \mathbb{P}(Y\geq t)=-b,$ then
\begin{itemize}
\item[(a)] $\mathbb{E}\, e^{\rho Y}<\infty, \ \mbox{ for any  } \rho> 0;$
\item[(b)] $\limsup\limits_{\rho\to \infty} \rho^{-\frac{p}{p-1}}\log  \mathbb{E}\, e^{\rho Y}<\infty$.
\end{itemize}
\end{lemma}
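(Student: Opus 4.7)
The plan is to reduce both parts to a single Chernoff-style upper bound, obtained by combining the standard identity $\E e^{\rho Y}=1+\rho\int_0^\infty e^{\rho t}\,\mathbb{P}(Y>t)\,dt$ with a pointwise tail estimate extracted from the hypothesis. Concretely, for any $c\in(0,b)$ (or any $c>0$ in the case $b=\infty$), the assumption $\lim_{t\to\infty} t^{-p}\log\mathbb{P}(Y\geq t)=-b$ supplies a threshold $T=T(c)$ such that $\mathbb{P}(Y\geq t)\leq e^{-c t^p}$ for all $t\geq T$. Splitting the integral at $T$ yields
$$\E e^{\rho Y}\;\leq\; e^{\rho T}+\rho\int_T^\infty e^{\rho t-c t^p}\,dt.$$

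Part (a) is then immediate: with $\rho>0$ fixed and $p>1$, the exponent $\rho t-c t^p$ tends to $-\infty$ as $t\to\infty$, so the tail integral converges, giving $\E e^{\rho Y}<\infty$.

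For part (b), the core task is to estimate $\int_T^\infty e^{\rho t-c t^p}\,dt$ as $\rho\to\infty$. The exponent $f_\rho(t):=\rho t-c t^p$ is maximized at $t^\ast=(\rho/(cp))^{1/(p-1)}$, with maximum value
$$f_\rho(t^\ast)\;=\;K_c\,\rho^{p/(p-1)},\qquad K_c\;:=\;(p-1)\,p^{-p/(p-1)}\,c^{-1/(p-1)}.$$
After rescaling $t=t^\ast s$, the integral becomes
$$t^\ast\,e^{K_c\rho^{p/(p-1)}}\int_{T/t^\ast}^\infty \exp\!\bigl(-\rho t^\ast\,g(s)\bigr)\,ds,\qquad g(s):=\tfrac{p-1}{p}-\bigl(s-s^p/p\bigr)\geq 0,$$
where $g$ vanishes only at $s=1$. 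Provided the $s$-integral is bounded uniformly in $\rho$ and $t^\ast$ grows only polynomially in $\rho$, taking logarithms, dividing by $\rho^{p/(p-1)}$, and letting $\rho\to\infty$ gives $\limsup\leq K_c<\infty$, which is the claim.

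The main (and only genuine) obstacle is the uniform bound on the inner $s$-integral. I plan to split it as $\int_0^{1/2}+\int_{1/2}^{3/2}+\int_{3/2}^\infty$: on the two outer pieces $g$ has a strictly positive lower bound, so the contribution is at most $e^{-\varepsilon\rho t^\ast}$ and vanishes; on the middle piece, the Taylor expansion $g(s)\sim \tfrac{p-1}{2}(s-1)^2$ gives a Gaussian-type bound of order $(\rho t^\ast)^{-1/2}$. All three pieces are therefore $O(1)$ in $\rho$, and neither they nor the factor $t^\ast$ contribute to the leading $\rho^{p/(p-1)}$ rate. (In fact, a cruder estimate showing $\int_0^\infty e^{-\rho t^\ast g(s)}\,ds\leq C$ with $C$ depending only on $p$ and $c$ would already suffice for the desired $\limsup$.)
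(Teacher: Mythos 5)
Your proposal is correct, and both parts share the paper's skeleton: the identity $\E e^{\rho Y}=1+\rho\int_0^\infty e^{\rho t}\mathbb{P}(Y\geq t)\,dt$, the extraction of a threshold $T$ with $\mathbb{P}(Y\geq t)\leq e^{-ct^p}$ for $t\geq T$, and the split of the integral at $T$. Where you genuinely diverge is in the core estimate for $\int e^{\rho t-ct^p}\,dt$ in part (b). The paper avoids any Laplace-method machinery: after a shift reducing to $\int_0^\infty \rho e^{\rho t-Bt^p}\,dt$ (with $B=b/2$, or arbitrary when $b=\infty$), it cuts the domain at $(\rho/B)^{1/(p-1)}$ and $(2\rho/B)^{1/(p-1)}$ and uses only the crude pointwise bounds $e^{\rho t(1-\frac{B}{\rho}t^{p-1})}\leq e^{\rho t}$, $\leq 1$, and $\leq e^{-\rho t}$ on the three pieces, arriving at $\limsup_{\rho\to\infty}\rho^{-p/(p-1)}\log\E e^{\rho Y}\leq B^{-1/(p-1)}$. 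Your route locates the true critical point $t^\ast=(\rho/(cp))^{1/(p-1)}$, rescales, and controls the residual $s$-integral; this is slightly more work (you must justify the uniform bound on the inner integral, including the integrability coming from the superlinear growth of $g$ on $[3/2,\infty)$, where the bound is not literally $e^{-\varepsilon\rho t^\ast}$ but follows since $g$ is increasing there), but it buys the sharp leading constant: letting $c\uparrow b$ gives $\limsup\leq(p-1)p^{-p/(p-1)}b^{-1/(p-1)}$, the Legendre-transform value, whereas the paper's elementary cut only yields the larger $(b/2)^{-1/(p-1)}$. For the lemma as stated, which asks only for finiteness, either constant suffices, and your argument is complete once the uniform bound on the inner integral is written out as you indicate.
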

\begin{proof} Note that the sequence $\{\frac{1}{t^p}\log \mathbb{P}(Y\geq t)\}$ is decreasing as $t\uparrow \infty$. Since $\lim\limits_{t\to\infty}\frac{1}{t^p}\log \mathbb{P}(Y\geq t)=-b$, by setting $B$ to be any positive number in the case $b=\infty$ and setting $B=b/2$ in the case $b<\infty$, there exists $T>0$ such that for any $t\geq T$
\[
\mathbb{P}(Y\geq t)\leq e^{-B t^p}, 
\]
and hence, by Fubini's theorem
\begin{align}\label{exp1}
\mathbb{E}\, e^{\rho Y}=&\ 1+\int_0^\infty \rho e^{\rho t}P(Y\geq t)dt\notag\\
=&\ 1+\int_0^T \rho e^{\rho t}P(Y\geq t)dt+\int_T^\infty \rho e^{\rho t}P(Y\geq t)dt\notag\\
\leq &\ e^{\rho T}+\int_{T}^\infty\rho e^{\rho t-B t^p}dt\notag\\
=&\ e^{\rho T}\left(1+\int_{T}^\infty \rho e^{\rho(t-T)-Bt^p}dt\right)\notag\\
\leq &\ e^{\rho T}\left(1+\int_{T}^\infty \rho e^{\rho(t-T)-B(t-T)^p}dt\right)\notag\\
=&\  e^{\rho T}\left(1+\int_{0}^\infty \rho e^{\rho t- Bt^p}dt\right)<\infty.
\end{align}
Note that
\begin{align}\label{ineq 1}
 \int_{0}^\infty \rho e^{\rho t-Bt^p}dt
&\geq \ \int_{0}^{\left(\frac\rho{B}\right)^{\frac{1}{p-1}}} \rho e^{\rho t\left(1-\frac{B}{\rho}t^{p-1}\right)}dt
\geq \int_{0}^{\left(\frac\rho{B}\right)^{\frac{1}{p-1}}} \rho dt= \rho^{\frac{p}{p-1}}B^{-\frac{1}{p-1}}\to \infty,
\end{align}
as $\rho\to\infty$. 
Note also that
\begin{align}\label{ineq2}
& \int_{0}^\infty \rho e^{\rho t-Bt^p}dt\notag\\
=&\ \int_{0}^{\left(\frac\rho{B}\right)^{\frac{1}{p-1}}} \rho e^{\rho t\left(1-\frac{B}{\rho}t^{p-1}\right)}dt+\int_{\left(\frac\rho{B}\right)^{\frac{1}{p-1}}}^{\left(\frac{2\rho}{B}\right)^{\frac{1}{p-1}}} \rho e^{-\rho t\left(\frac{B}{\rho}t^{p-1}-1\right)}dt+\int_{\left(\frac{2\rho}{B}\right)^{\frac{1}{p-1}}}^\infty \rho e^{-\rho t\left(\frac{B}{\rho}t^{p-1}-1\right)}dt\notag\\
\leq &\ \int_{0}^{\left(\frac\rho{B}\right)^{\frac{1}{p-1}}} \rho e^{\rho t}dt+\int_{\left(\frac\rho{B}\right)^{\frac{1}{p-1}}}^{\left(\frac{2\rho}{B}\right)^{\frac{1}{p-1}}} \rho dt+\int_{\left(\frac{2\rho}{B}\right)^{\frac{1}{p-1}}}^\infty \rho e^{-\rho t}dt\notag\\
=&\ \exp\left\{\rho^{\frac{p}{p-1}}B^{-\frac{1}{p-1}}\right\}-1+\rho^{\frac{p}{p-1}}B^{-\frac{1}{p-1}}\left(2^{\frac1{p-1}}-1\right)+\exp\left\{-\rho^{\frac{p}{p-1}}\left(\frac{2}{B}\right)^{\frac{1}{p-1}}\right\}\notag\\
\leq&\  \exp\{\rho^{\frac{p}{p-1}}B^{-\frac{1}{p-1}}\}+\rho^{\frac{p}{p-1}}B^{-\frac{1}{p-1}}\left(2^{\frac1{p-1}}-1\right).
\end{align}
It is easy to see that 
\begin{equation}\label{eq1}
\rho^{\frac{p}{p-1}}B^{-\frac{1}{p-1}}\left(2^{\frac1{p-1}}-1\right)\leq \exp\{\rho^{\frac{p}{p-1}}B^{-\frac{1}{p-1}}\}
\end{equation}
if $\rho$ is large enough.

Using the fact $\log(1+x)\leq 1+\log x$ for all $x\geq 1$, by \eqref{exp1}-\eqref{eq1}, when $\rho $ is large enough, we have
\begin{align*}
\rho^{-\frac{p}{p-1}}\log  \mathbb{E}\, e^{\rho Y}&\leq T\rho^{-\frac{1}{p-1}}+\rho^{-\frac{p}{p-1}}\left(1+\log \int_{0}^\infty \rho e^{\rho t-Bt^p}dt\right)\\
&\leq T\rho^{-\frac{1}{p-1}}+\rho^{-\frac{p}{p-1}}\left(1+\log 2+\rho^{\frac{p}{p-1}}B^{-\frac{1}{p-1}}\right).
\end{align*}
Therefore, we can show that
\[
\limsup\limits_{\rho\to \infty} \rho^{-\frac{p}{p-1}}\log  \mathbb{E}\, e^{\rho Y}\leq B^{-\frac{1}{p-1}}<\infty.
\]
The proof is completed.
\end{proof}

\begin{lemma}\label{lemma8} 
For the Riemann-Liouville process $X^\alpha$ we have
$$\liminf_{t\to\infty} \frac1t\log \E \exp\left(\int_0^t |X^\alpha_s|^{-\beta} ds \right)>0.$$ 
\end{lemma}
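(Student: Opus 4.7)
The plan is to combine the $\alpha$-self-similarity of $X^\alpha$ with a Gaussian small-ball lower bound to exhibit a single event on which $\int_0^t|X^\alpha_s|^{-\beta}ds$ is of order $t$ and which itself has probability at least $e^{-Ct}$.

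Fix a constant $a>0$ to be chosen later. On the event $A_t^a:=\{\sup_{0\le s\le t}|X^\alpha_s|\le a\}$ the integrand satisfies $|X^\alpha_s|^{-\beta}\ge a^{-\beta}$ for every $s\in[0,t]$, so $\int_0^t|X^\alpha_s|^{-\beta}ds\ge a^{-\beta}t$ on $A_t^a$ and hence
$$\E\exp\left(\int_0^t|X^\alpha_s|^{-\beta}ds\right)\ge e^{a^{-\beta}t}\,\mathbb{P}(A_t^a).$$
By the $\alpha$-self-similarity of $X^\alpha$, $\mathbb{P}(A_t^a)=\mathbb{P}\bigl(\sup_{0\le s\le 1}|X^\alpha_s|\le at^{-\alpha}\bigr)$. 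I now invoke the standard Gaussian small-ball estimate for $X^\alpha$: there exist $C_0,\e_0>0$ such that
$$\mathbb{P}\left(\sup_{0\le s\le 1}|X^\alpha_s|\le \e\right)\ge \exp\bigl(-C_0\e^{-1/\alpha}\bigr),\qquad \e\in(0,\e_0).$$
Choosing $\e=at^{-\alpha}$, which lies in $(0,\e_0)$ for all large $t$, gives $\mathbb{P}(A_t^a)\ge \exp(-C_0a^{-1/\alpha}t)$ and therefore
$$\frac{1}{t}\log\E\exp\left(\int_0^t|X^\alpha_s|^{-\beta}ds\right)\ge a^{-\beta}-C_0a^{-1/\alpha}$$
for all sufficiently large $t$. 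Since $\alpha\beta<1$, i.e.\ $\beta<1/\alpha$, the ratio $a^{-\beta}/a^{-1/\alpha}=a^{1/\alpha-\beta}\to\infty$ as $a\to\infty$, so fixing $a$ large enough makes $a^{-\beta}-C_0a^{-1/\alpha}>0$, which yields the required strict positivity of the liminf.

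The one non-trivial ingredient, and hence the main obstacle, is the small-ball estimate quoted above. It is not proved in the paper but is a classical consequence of general Gaussian small-ball theory applied to a centered process whose $L^2$-increments satisfy the H\"older bound $\E(X^{\alpha,1}_s-X^{\alpha,1}_r)^2\le C_\alpha|s-r|^{2\alpha}$ recorded in Remark \ref{rmk-RL}; the exponent $1/\alpha$ is precisely the one dictated by the $\alpha$-regularity of the sample paths. One may either invoke Kuelbs--Li-type metric-entropy estimates in the Cameron--Martin space of $X^\alpha$, or reduce to fractional Brownian motion of Hurst index $\alpha$, for which the asymptotic $-\log\mathbb{P}(\sup_{0\le s\le 1}|B^\alpha_s|\le\e)\asymp\e^{-1/\alpha}$ is well known; the $d$-dimensional version follows from the independence of the coordinates.
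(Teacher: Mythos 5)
Your proof is correct and takes essentially the same route as the paper: the paper restricts to the event $\{\sup_{j}\sup_{0\le s\le 1}|X^{\alpha,j}_s|\le\e\}$, uses the Li--Shao small-ball bound $\exp(-c_0d\,\e^{-1/\alpha})$ coordinatewise, scales the functional to get a gain $c_d t^{1-\alpha\beta}\e^{-\beta}$, and then optimizes with $\e\propto t^{-\alpha}$, which is precisely your substitution $\e=at^{-\alpha}$. In both arguments the strict positivity comes from $\alpha\beta<1$ making the gain $a^{-\beta}t$ dominate the small-ball cost $C_0a^{-1/\alpha}t$ for large $a$, so the two proofs coincide up to reparametrization.
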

\begin{proof}
For any $\e >0$, by the small ball probability result provided in  \cite[Theorem 4.1]{ls}, there exists a constant $c_0\in (0, \infty)$ such that
\begin{equation}\label{eq-sb}
\mathbb P\left(\sup_{0\le s\le 1} |X_s^{\alpha,1}| \le \e\right) 
\ge \exp(-c_0 \e^{-1/\alpha}).
\end{equation}
Denote $S_\e=\left\{\sup\limits_{j\in\{1,\dots, d\}} \sup\limits_{0\le s\le 1}|X_s^{\alpha,j}| \le \e\right\}$. By \eqref{eq-sb}, we have $$P(S_\e)= \left(\mathbb P\left(\sup\limits_{0\le s\le 1} |X_s^{\alpha,1}| \le \e\right)\right)^d\ge\exp(-c_0d \e^{-1/\alpha}).$$ 
Then,  
\begin{align*}
\E \exp\left(\int_0^t |X^\alpha_s|^{-\beta} ds \right) \ge&\ \E \left[\exp\left(t^{1-\alpha\beta}\int_0^1 |X^\alpha_s|^{-\beta} ds \right)\mathbf1_{S_\e}\right]\\
\ge&\ \exp\left(c_dt^{1-\alpha\beta}\e^{-\beta}\right) \mathbb P(S_\e)\\
\ge&\ \exp(c_dt^{1-\alpha\beta}\e^{-\beta}-c_0 d \e^{-1/\alpha} ),
\end{align*}
where $c_d$ is a positive constant depending on 
$d$. 

Now, choose $\e=\left(\frac{2c_0d}{ c_d}\right)^{\alpha/(1-\alpha\beta)} t^{-\alpha}$ such that \[
c_dt^{1-\alpha\beta}\e^{-\beta}-c_0 d \e^{-1/\alpha}=c_0d\e^{-1/\alpha}= C t,
\]
where $C=c_0d\left(\frac{c_d}{2c_0d}\right)^{1/(1-\alpha\beta)}$. Consequently, we can show
$$\liminf_{t\to\infty}\frac1t\log \E \exp\left(\int_0^t |X^\alpha_s|^{-\beta} ds\right)\ge C>0.$$
We prove the desired result.\hfill
\end{proof}

The following lemma (see \cite[Lemma 2.3]{km02}) connects the moments and large deviations. 
\begin{lemma}\label{lemma10}
Let $F\ge 0$ be a random variable and let   $p >0$. Then, for any $a\in \R$, the following results hold. 
\begin{itemize} 
\item[(i)] If $$\limsup_{m\to \infty} \frac1m\log\left(\frac1{(m!)^p}\E F^m\right)\le a,$$
for some $a\in \R$, then
$$ \limsup_{x\to\infty} \frac1{x^{1/p}} \log \mathbb P(F\ge x)
\le -p e^{-a/p}.$$

\item[(ii)] If  $$\lim_{m\to \infty} \frac1m\log\left(\frac1{(m!)^p}\E F^m\right)=a,$$
for some $a\in \R$, then
$$ \lim_{x\to\infty} \frac1{x^{1/p}} \log \mathbb P(F\ge x)=-p e^{-a/p}.$$
\end{itemize}

\end{lemma}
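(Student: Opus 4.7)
The plan is to treat (i) via Markov's inequality optimized in $m$, and (ii) by combining (i) with a contradiction argument that exploits monotonicity of the tail. For (i), apply Markov's inequality $\mathbb P(F\geq x)\leq x^{-m}\E F^m$ for each positive integer $m$. The hypothesis gives $\log\E F^m\leq(a+\epsilon)m+p\log(m!)$ for all $m$ sufficiently large, and Stirling's formula $\log(m!)=m\log m-m+O(\log m)$ yields
\[
\log\mathbb P(F\geq x)\leq m\bigl(-\log x+a+\epsilon+p\log m-p\bigr)+O(\log m).
\]
Viewed as a function of continuous $m$, the bracket vanishes at $m^{\star}=x^{1/p}e^{-(a+\epsilon)/p}$; choosing $m=\lfloor m^{\star}\rfloor$ and substituting gives $\log\mathbb P(F\geq x)\leq-p\,x^{1/p}e^{-(a+\epsilon)/p}+O(\log x)$. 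Dividing by $x^{1/p}$, sending $x\to\infty$, and then $\epsilon\downarrow 0$, proves (i).

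The upper half of (ii) is immediate from (i). For the matching lower bound I argue by contradiction: assume $\liminf_{x\to\infty}x^{-1/p}\log\mathbb P(F\geq x)\leq-(c+\epsilon)$ with $c:=pe^{-a/p}$ and $\epsilon>0$. Monotonicity of $y\mapsto\mathbb P(F\geq y)$ produces a sequence $x_n\uparrow\infty$ with $\mathbb P(F\geq y)\leq e^{-(c+\epsilon/2)x_n^{1/p}}$ for every $y\geq x_n$, while (i) supplies the complementary global bound $\mathbb P(F\geq y)\leq e^{-(c-\eta)y^{1/p}}$ for any $\eta\in(0,\epsilon/4)$ and large $y$. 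Split $\E F^m=m\int_0^\infty y^{m-1}\mathbb P(F\geq y)\,dy$ into the three regions $[0,x_n]$, $[x_n,\rho x_n]$ with crossover $\rho=((c+\epsilon/2)/(c-\eta))^p>1$, and $[\rho x_n,\infty)$, using respectively $\mathbb P\leq 1$, the subsequential bound, and the (i)-bound. Take $m=m_n$ slightly above $(c/p)x_n^{1/p}$, so that the Laplace peak $y^{\ast}=(m_np/c)^p$ of $y^{m_n-1}e^{-cy^{1/p}}$ sits strictly inside $[x_n,\rho x_n]$. A Laplace-method computation then shows that this middle region, which carries essentially all the mass of $\E F^{m_n}$, contributes a factor strictly smaller than the baseline $(m_n!)^p e^{am_n}$ by $e^{-\delta m_n}$ for some $\delta=\delta(\epsilon)>0$, while the flanking contributions are negligible. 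Hence $m_n^{-1}\log\bigl(\E F^{m_n}/(m_n!)^p\bigr)\leq a-\delta$ along $\{m_n\}$, contradicting the hypothesized limit.

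The main obstacle is precisely the saddle-point analysis in this last step, and it explains why a naive Paley--Zygmund lower bound is not enough: the bound $\mathbb P(F\geq x)\geq\bigl((\E F^m-x^m)_+/(\E F^{mq})^{1/q}\bigr)^{q/(q-1)}$ combined with the hypothesis and Stirling gives $\E F^m/(\E F^{mq})^{1/q}\sim q^{-pm}$, and after optimizing in $m$ and sending $q\downarrow 1$ one only obtains the constant $pe\cdot e^{-a/p}$, carrying a spurious factor of $e$ inherited from Stirling's $(m!)^{1/m}\sim m/e$. Removing this factor is what the contradiction argument is designed to do, and what makes it delicate: one has to (a) exploit monotonicity to propagate the subsequential bound to the whole interval $[x_n,\rho x_n]$, (b) shift the Laplace peak slightly to the right of $x_n$ so that it lies entirely in this interval, and (c) verify that the flanking contributions from $y<x_n$ (only bounded by $1$) and from $y>\rho x_n$ (only bounded by (i)) do not restore the lost mass $e^{-\delta m_n}$.
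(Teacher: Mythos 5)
The paper does not prove this lemma at all---it is imported verbatim from K\"onig--M\"orters \cite[Lemma 2.3]{km02}---so the comparison is against the standard argument rather than a proof in the text. Your part (i) is exactly that standard argument (Markov's inequality $\mathbb P(F\ge x)\le x^{-m}\E F^m$, Stirling, optimization in $m$) and is correct. One small misstatement: at $m^\star=x^{1/p}e^{-(a+\epsilon)/p}$ the bracket does not vanish, it equals $-p$ (what vanishes there is the derivative of $m\mapsto m\cdot(\text{bracket})$); this is in fact what produces the value $-p\,m^\star$ that you then correctly use.

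In part (ii) the contradiction strategy is viable, but the three-region estimate as you state it contains a step that fails: the region $[0,x_n]$ cannot be handled with the trivial bound $\mathbb P\le 1$. With $m_n$ chosen so that the peak $y^\star=(m_np/c)^p$ sits just inside $[x_n,\rho x_n]$, write $x_n=y^\star/\lambda$ with $\lambda\in(1,\rho)$; then $\log x_n=p\log m_n+a-\log\lambda+o(1)$ (using $\log(p/c)=a/p$), so the $[0,x_n]$ contribution under $\mathbb P\le1$ is $x_n^{m_n}=\exp\{pm_n\log m_n+am_n-m_n\log\lambda\}$, whereas the baseline is $(m_n!)^pe^{am_n}=\exp\{pm_n\log m_n-pm_n+am_n+O(\log m_n)\}$. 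The flanking term therefore \emph{exceeds} the baseline by the factor $e^{(p-\log\lambda)m_n}$, and since for small $\epsilon$ the crossover $\rho$ is close to $1$ (hence $\log\lambda<p$), this term alone is exponentially larger than the bound $e^{(a-\delta)m_n}(m_n!)^p$ you need, so the contradiction never materializes. The repair is to use the conclusion of part (i), $\mathbb P(F\ge y)\le e^{-(c-\eta)y^{1/p}}$, on $[y_0,x_n]$ as well (reserving $\mathbb P\le1$ only for a fixed interval $[0,y_0]$, whose contribution $y_0^{m_n}$ genuinely is negligible): since the integrand $y^{m_n-1}e^{-(c-\eta)y^{1/p}}$ increases up to its peak, which lies to the right of $x_n$, that piece is at most $m_nx_n^{m_n}e^{-(c-\eta)x_n^{1/p}}$, and a computation parallel to your middle-region one (it reduces to $1-\log(1+s)-1/(1+s)<0$ for $s=\lambda^{1/p}-1>0$) shows it is exponentially below the baseline. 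Likewise the region $[\rho x_n,\infty)$ needs an actual incomplete-Gamma tail estimate rather than the bare statement that (i) applies, though there the argument does close (it reduces to $\log\kappa+1-\kappa<0$ for $\kappa=\rho^{1/p}/\lambda^{1/p}>1$). With these repairs your outline yields the lemma; as written, the $[0,x_n]$ estimate is a genuine gap.
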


\section{Large deviations for the functionals of fBm, sub-fBm and bi-fBm}\label{section4}

\subsection{Some preliminaries on fBm,  sub-fBm and bi-fBm}\label{section4.1}

In this subsection, we will first recall some preliminaries on fBm,  sub-fBm and bi-fBm, and then we will provide some detailed results on the RKHSs associated to some of them. 

\begin{definition}
A centered $1$-dimensional Gaussian process $\{B_t^H, t\ge 0\}$ is called a fBm  with Hurst parameter $H\in (0,1),$ if the covariance function is given by
\begin{equation}\label{cov-fbm}
R(t,s)=\E[B_t^H B_s^H]=\frac{1}{2}\left(t^{2H}+s^{2H}-|t-s|^{2H}\right).
\end{equation}
\end{definition}

It immediately implies from the above covariance function that fBm has self-similarity: $\{B^H_{at}, t\geq 0\}\overset{d}{=}a^H\{B_t^H, t\geq 0\}$, for any $a>0$. 

When $H=\frac12$, the process $B^{\frac12}$ is a standard Brownian motion. As an extension of classical Brownian motion,  fBm is essentially different  from Brownian motion in the sense that fBm is not a semi-martingale nor a Markov process when $H\neq \frac12$. We refer readers to \cite{bhoz, nualart06} and the references therein for more details on the analysis of fBm. 

In this subsection, let $W=\{W_t, t\in \R\}$ be a Brownian motion on $\R$. Then, fBm $B^H$ has the following representation (see \cite{M-VN} and \cite{st}):
\begin{align}\label{dec-fbm}
B^H_t=&\ \alpha_H\int_{-\infty}^t\left[(t-s)^{H-\frac12}-(-s)^{H-\frac12}_+\right]dW_s\notag\\
=&\ \alpha_H X^{H}_t+\alpha_H\int_{-\infty}^0\left[(t-s)^{H-\frac12}-(-s)^{H-\frac12}_+\right]dW_s\notag\\
=&: \alpha_H X^{H}_t+\eta^H_t, 
\end{align}
where $\alpha_H=\left(\int_0^\infty \left[(1+s)^{H-\frac12}-s^{H-\frac12}\right]^2ds+\frac1{2H}\right)^{\frac12}$, $X^H$ is the Riemann-Liouville process with parameter $H$, and the process $\eta^H_t= \alpha_H\int_{-\infty}^0\left[(t-s)^{H-\frac12}-(-s)^{H-\frac12}_+\right]dW_s$ is independent of $X^H$.

The left-sided fractional Riemann-Liouville
integrals of $f\in L^2[0,1]$ of order $\alpha>0 $ are defined for almost all $t\in \left[
0,1\right] $ by 
\[
I_{0^+}^\alpha f(t) =\frac{1}{\Gamma(\alpha)} \int_0^t (t-s)^{\alpha-1}f(s)ds.
\]
Let $I_{0+}^{\alpha}(L^2[0,1])$ denote the image of $L^2[0,1]$ under $I^\alpha_{0^+}$.

Lemma 10.2 in \cite{vdv-vz} provides the following result on the RKHS of the Riemann-Liouville process $X^H$:
\[
\bh(X^H)=I^{H+\frac12}_{0+}(L^2[0,1]).
\]

Regarding the decomposition \eqref{dec-fbm} of fBm $B^H$, we summarize Propositions 3.3 and 3.5 in \cite{clrs} as follows.
\begin{proposition}\label{RKHS-RV}
For any $0<\varepsilon<1$,  the process $\{\eta^H_t, t\geq \varepsilon\}$ has $C^\infty$-sample paths a.s., and 
 there is a Gaussian process $\{\eta_{t}^{\varepsilon,H}, t\geq 0 \}$ such that
\begin{itemize}
\item[(a)] $\eta_{t}^{\varepsilon,H}=\eta^H_t$ for all $t\geq \varepsilon$;
\item[(b)] 
$
\mathbb{P}\left(\left\{\eta^{\varepsilon,H}_{t}, 0\leq t\leq 1\right\}\in \bh(X^H)\right)=1.
$
\end{itemize}
\end{proposition}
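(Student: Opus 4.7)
The plan is to handle the two parts separately, exploiting the explicit Wiener-integral representation
\[
\eta^H_t=\alpha_H\int_{-\infty}^0\bigl[(t-s)^{H-1/2}-(-s)^{H-1/2}\bigr]\,dW_s
\]
together with the characterization $\bh(X^H)=I^{H+1/2}_{0+}(L^2[0,1])$ recalled just before the proposition.

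For part (a), I would differentiate under the integral sign in $t$. For $s<0$ and $t\ge\varepsilon>0$ the kernel $K(t,s):=(t-s)^{H-1/2}-(-s)^{H-1/2}$ is $C^\infty$ in $t$ with $\partial_t^k K(t,s)=c_{H,k}\,(t-s)^{H-1/2-k}$ for $k\ge 1$, and
\[
\int_{-\infty}^0(t-s)^{2H-1-2k}\,ds<\infty
\]
because $2H-1-2k<-1$ whenever $k\ge 1$ (using $H<1$). Hence each formal derivative defines a centered Gaussian process $\eta^{H,(k)}$ on $[\varepsilon,1]$ whose covariance is jointly $C^\infty$ on $[\varepsilon,1]^2$. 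A standard Kolmogorov-continuity argument gives continuous modifications, and a telescoping/mean-square argument identifies each $\eta^{H,(k)}$ with the pathwise $k$-th derivative of $\eta^H$, yielding $C^\infty$ sample paths almost surely.

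For part (b), I would introduce a smooth cutoff $\psi\in C^\infty([0,1])$ with $\psi\equiv 0$ on $[0,\varepsilon/2]$ and $\psi\equiv 1$ on $[\varepsilon,1]$, and set $\eta^{\varepsilon,H}_t:=\psi(t)\,\eta^H_t$. Applying part (a) with $\varepsilon/2$ in place of $\varepsilon$, almost every sample path of $\eta^H$ is $C^\infty$ on $[\varepsilon/2,1]$, so $t\mapsto\eta^{\varepsilon,H}_t(\omega)$ is $C^\infty$ on $[0,1]$, vanishes identically on $[0,\varepsilon/2]$, and equals $\eta^H_t(\omega)$ on $[\varepsilon,1]$, establishing (a) of the proposition. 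It then remains to show that any such sample path $f$ lies in $I^{H+1/2}_{0+}(L^2[0,1])$.

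This last step is the main obstacle and is where I would spend the real work. Setting $\alpha:=H+\tfrac12\in(\tfrac12,\tfrac32)$, I would exhibit $g\in L^2[0,1]$ with $f=I^\alpha_{0+}g$ using the semigroup property of Riemann-Liouville integrals together with the fact that $f$ and all its derivatives vanish at $0$. When $\alpha\in(\tfrac12,1]$, the identity $f=I^\alpha_{0+}\bigl(I^{1-\alpha}_{0+}f'\bigr)$ does the job with $g:=I^{1-\alpha}_{0+}f'\in L^\infty$. When $\alpha\in(1,\tfrac32)$, applying the previous case to $f'$ (which still vanishes near $0$) yields $f'=I^{\alpha-1}_{0+}g$ with $g:=I^{2-\alpha}_{0+}f''\in L^\infty$, and then $f=I^1 f'=I^\alpha_{0+}g$. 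In either regime $g\in L^\infty\subset L^2$, hence $\eta^{\varepsilon,H}(\omega)\in\bh(X^H)$ almost surely. The cutoff $\psi\equiv 0$ near $0$ is chosen precisely to neutralize the singular boundary terms $t^{-\beta}f^{(j)}(0)$ that would otherwise obstruct $L^2$ membership of the fractional derivative.
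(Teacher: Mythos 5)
Your proposal is correct, but it takes a genuinely different route from the paper: the paper offers no proof at all for this proposition, stating it as a summary of Propositions 3.3 and 3.5 of \cite{clrs} (and the analogous construction it does spell out, in the proof of Theorem \ref{thm-sfbm}, extends $\eta^H$ below $\varepsilon$ by the cubic interpolant $a_1t^2+a_2t^3$ matching $\eta^H_\varepsilon$ and its derivative, then invokes \cite[Proposition 3.4]{clrs} to place the resulting $C^1$ function with $L^2$ second derivative into $I^{3/2}_{0+}(L^2[0,1])\subseteq I^{H+1/2}_{0+}(L^2[0,1])$). You instead multiply by a smooth cutoff vanishing near $0$ and verify membership in $I^{H+1/2}_{0+}(L^2[0,1])$ by hand via the semigroup identity $I^{\alpha}_{0+}I^{1-\alpha}_{0+}=I^{1}_{0+}$ (split into the cases $\alpha\le 1$ and $\alpha\in(1,\tfrac32)$), which amounts to reproving the cited criterion rather than quoting it. Both constructions are valid; yours is self-contained and produces a globally $C^\infty$ extension, while the paper's is shorter at the cost of outsourcing both the smoothness statement and the fractional-integral criterion to \cite{clrs}. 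Two small points to tighten: in part (a) your integrability check $2H-1-2k<-1$ handles the tail at $s\to-\infty$, but you should note separately that there is no singularity at $s=0$ precisely because $t\ge\varepsilon>0$ (this is where the restriction $t\ge\varepsilon$ enters); and in part (b) you should record that $\psi(t)\eta^H_t$ is indeed a Gaussian process (a deterministic function times a Gaussian process), since the proposition asserts Gaussianity of $\eta^{\varepsilon,H}$.
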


\bigskip
Next, we discuss the RKHS of fBm $B^H$. The covariance function of fBm can be expressed as (see \cite{du}) 
\begin{equation}\label{cov-fbm-KH}
R(t,s)=\E[B_t^HB_s^H]=\int_0^{t\wedge s}K_H(t,r)K_H(s,r)dr, 
\end{equation}
where $$K_H(t,s)=c_H(t-s)^{H-\frac12}F\left(H-\frac12, \frac12-H, H+\frac12, 1-\frac ts\right)\mathbf{1}_{[0,t]}(s),$$
with $c_H=\left[\frac{2H\Gamma\left(\frac{3}{2}-H\right)}{\Gamma(2-2H)\Gamma\left(H+\frac12\right)}\right]^{1/2}$ and $F(a,b, c,z)$ being the Gauss hypergeometric function.

Note that for any fixed $t\in [0,1]$, $K_H(t,\cdot)\in L^2[0,1]$. Consider the integral transform $\mathcal{K}_H$ defined on $L^2[0,T]$ by 
$$ (\mathcal{K}_H f)(t):= \int_0^t K_H(t,s) f(s) ds, \mbox{ for any } f\in L^2[0,1].$$ 

We can easily see from \eqref{cov-fbm-KH} that  $R(s,\cdot)=\mathcal{K}_H(K_H(s,\cdot))$. In fact, the RKHS of $B^H$ is $\mathbb{H}(B^H)=I_{0+}^{H+\frac12}(L^2[0,1])$ endowed with the inner product $\langle\cdot, \cdot\rangle_{\mathbb{H}(B^H)}$. and the integral transform $\mathcal{K}_H$ is an isomorphism from $L^2[0,1]$ onto  $I_{0+}^{H+\frac12}(L^2[0,1])$ (see \cite[Theorems 2.1 and 3.3 and Remark 3.1]{du}).

\bigskip
 \begin{definition}
 Let $\{S_t^H, t\geq 0\}$ denote a $1$-dimensional  sub-fBm with index $H\in (0,1)$ introduced in \cite{BGT}, which is a centered Gaussian process with covariance function 
\begin{equation}\label{cov-subfbm}
\E[S_t^H S_s^H]=t^{2H}+s^{2H}-\frac12\left((t+s)^{2H}+|t-s|^{2H}\right).
\end{equation}
\end{definition}

Clearly when $H=\frac12$, $S^{\frac12}$ is a standard Brownian motion. Note that the process $S^H$  has self-similarity $\{S^{H}_{at}, t\geq 0\}\overset{d}{=}a^{H}\{S_t^{H}, t\geq 0\}$, for any $a>0$.

 Define, for $\alpha\in \left(0,\frac12\right)$,
\begin{equation}\label{eq-y}
Y_t^\alpha= \int_0^\infty (1-e^{-r t}) r^{-\alpha-\frac{1}{2}} dW_r.
\end{equation}
By some basic calculation, we get
\begin{equation}\label{cov-Y}
\E(Y^\alpha_tY_s^\alpha)=\frac{\Gamma(1-2\alpha)}{2\alpha}\left(t^{2\alpha}+s^{2\alpha}-(t+s)^{2\alpha}\right).
\end{equation}

Assume that $W$ in \eqref{eq-y} is independent of $B^H$,  part (a) of  \cite[Theorem 3.5]{ct} provides the following decomposition result for sub-fBm. 

\begin{proposition}\label{decom-sub}
For  $0<H<\frac12$, $\left\{B^H_t+\sqrt{\frac{H(1-2H)}{\Gamma(2-2H)}}\, Y^{H}_t, t\geq 0\right\}$ has the same law as $\{S^H_t,\ t\geq 0\}$.
 \end{proposition}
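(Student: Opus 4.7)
The plan is to verify equality in distribution by matching covariance functions, which suffices because both $\{B^H_t+c Y^H_t\}$ and $\{S^H_t\}$ are centered Gaussian processes (with $c=\sqrt{H(1-2H)/\Gamma(2-2H)}$). Since the Brownian motion $W$ driving $Y^H$ is independent of $B^H$, the processes $B^H$ and $Y^H$ are independent, so
\[
\E\!\left[(B^H_t+cY^H_t)(B^H_s+cY^H_s)\right]=\E[B^H_tB^H_s]+c^2\,\E[Y^H_tY^H_s].
\]
Now I would invoke the known formula \eqref{cov-fbm} for $\E[B^H_tB^H_s]$ and \eqref{cov-Y} (with $\alpha=H\in(0,1/2)$) for $\E[Y^H_tY^H_s]$, giving
\[
\E[B^H_tB^H_s]+c^2\E[Y^H_tY^H_s]=\tfrac12(t^{2H}+s^{2H}-|t-s|^{2H})+c^2\tfrac{\Gamma(1-2H)}{2H}\bigl(t^{2H}+s^{2H}-(t+s)^{2H}\bigr).
\]

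The key arithmetic step is to check that $c^2 \cdot \Gamma(1-2H)/(2H) = 1/2$. Substituting $c^2 = H(1-2H)/\Gamma(2-2H)$ and applying the Gamma function recursion $\Gamma(2-2H) = (1-2H)\Gamma(1-2H)$ gives
\[
c^2\cdot\frac{\Gamma(1-2H)}{2H}=\frac{H(1-2H)}{\Gamma(2-2H)}\cdot\frac{\Gamma(1-2H)}{2H}=\frac{(1-2H)\Gamma(1-2H)}{2\,\Gamma(2-2H)}=\frac12.
\]
Plugging this back gives the covariance
\[
t^{2H}+s^{2H}-\tfrac12|t-s|^{2H}-\tfrac12(t+s)^{2H},
\]
which matches \eqref{cov-subfbm} exactly. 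Equality of covariance functions together with the centered Gaussian property of both processes yields equality in law.

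Strictly speaking there is no major obstacle: the only nontrivial input is formula \eqref{cov-Y}, which the paper states follows from a basic calculation (it comes from the identity $(1-e^{-rt})(1-e^{-rs})=(1-e^{-rt})+(1-e^{-rs})-(1-e^{-r(t+s)})$ combined with $\int_0^\infty(1-e^{-\lambda r})r^{-2H-1}dr=\lambda^{2H}\Gamma(1-2H)/(2H)$). Thus the proof is a short verification, with the Gamma function identity being the sole piece of real content.
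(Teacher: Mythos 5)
Your proof is correct and complete. Note that the paper itself does not prove this proposition at all --- it simply cites part (a) of Theorem 3.5 in the reference \cite{ct} (Ruiz de Ch\'avez and Tudor), so your argument is a self-contained substitute for that citation rather than a variant of an argument appearing in the paper. The verification you give is the standard one and all steps check out: independence of $B^H$ and $Y^H$ (guaranteed because the paper assumes the $W$ in \eqref{eq-y} is independent of $B^H$) makes the sum a centered Gaussian process whose covariance is the sum of the two covariances; the formula \eqref{cov-Y} follows exactly as you indicate from
\[
\int_0^\infty\bigl(1-e^{-\lambda r}\bigr)r^{-2H-1}\,dr=\frac{\lambda^{2H}\,\Gamma(1-2H)}{2H},\qquad 0<2H<1,
\]
applied to the three terms of $(1-e^{-rt})(1-e^{-rs})=(1-e^{-rt})+(1-e^{-rs})-(1-e^{-r(t+s)})$; and the identity $\Gamma(2-2H)=(1-2H)\Gamma(1-2H)$ gives $c^2\,\Gamma(1-2H)/(2H)=\tfrac12$, so the combined covariance is $t^{2H}+s^{2H}-\tfrac12\bigl((t+s)^{2H}+|t-s|^{2H}\bigr)$, matching \eqref{cov-subfbm}. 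Since equality of mean and covariance functions determines the law of a Gaussian process, the conclusion follows. The only thing your write-up implicitly uses that is worth making explicit is that the restriction $H<\tfrac12$ is exactly what makes $Y^H$ well defined ($\alpha\in(0,\tfrac12)$ in \eqref{eq-y}) and keeps $\Gamma(1-2H)$ finite and the constant $c^2$ positive.
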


 \begin{definition}
The  bi-fBm $\{Z_t^{H,K}, t\ge 0\}$ with parameters $H\in (0,1)$ and $K\in (0,1]$ is a generalization of fBm, defined as  a centered $1$-dimensional Gaussian process with covariance function 
\begin{equation}\label{cov-bifbm}
\E[Z_t^{H,K}Z_s^{H,K}]=\frac1{2^K}\Big( (t^{2H}+s^{2H})^K -|t-s|^{2HK} \Big). 
\end{equation}
\end{definition}

It is obvious that the bi-fBm $Z^{H,1}$ degenerates to a fBm when $K=1$ and that $Z^{H,K}$ has self-similarity $\{Z^{H,K}_{at}, t\geq 0\}\overset{d}{=}a^{HK}\{Z_t^{H,K}, t\geq 0\}$, for any $a>0$.

Now assume that $W$ in \eqref{eq-y} is independent of $Z^{H,K}$. The following decomposition result for bi-fBm is obtained in \cite{ln}.
\begin{proposition}\label{decom-bi}
For $H, K\in(0,1)$, $\left\{Z_t^{H,K}+ \sqrt{\frac{K}{2^{K}\Gamma(1-K)}}~Y_{t^{2H}}^{\frac{K}{2}}, t\geq 0\right\}$ has the same law as $\left\{2^{\frac{1-K}{2}}B^{HK}_t, t\geq 0\right\}$, where  $B^{HK}$ is a fBm with Hurst parameter $HK$. 
\end{proposition}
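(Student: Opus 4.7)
The plan is direct: both processes in question are centered Gaussian, so it suffices to verify that their covariance functions coincide. Thus the whole proof reduces to an algebraic identity involving the three covariance formulas \eqref{cov-bifbm}, \eqref{cov-Y}, and \eqref{cov-fbm}, together with the independence of $Z^{H,K}$ and $W$ (hence of $Z^{H,K}$ and $Y^{K/2}$).

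First I would denote the left-hand side process by $U_t := Z_t^{H,K} + c\,Y_{t^{2H}}^{K/2}$ with $c = \sqrt{K/(2^K\Gamma(1-K))}$. Since $Z^{H,K}\perp Y^{K/2}$, the covariance splits as
\begin{equation*}
\E[U_tU_s] = \E[Z_t^{H,K}Z_s^{H,K}] + c^2\,\E[Y_{t^{2H}}^{K/2}\,Y_{s^{2H}}^{K/2}].
\end{equation*}
Using \eqref{cov-bifbm} for the first summand and applying \eqref{cov-Y} with $\alpha = K/2$ at the time points $u = t^{2H},\ v = s^{2H}$ for the second summand gives
\begin{equation*}
\E[Y_{t^{2H}}^{K/2}Y_{s^{2H}}^{K/2}] = \frac{\Gamma(1-K)}{K}\Bigl(t^{2HK}+s^{2HK}-(t^{2H}+s^{2H})^K\Bigr).
\end{equation*}
The constant $c$ is precisely tuned so that $c^2\cdot \Gamma(1-K)/K = 1/2^K$, which is the same prefactor appearing in \eqref{cov-bifbm}. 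Hence the $(t^{2H}+s^{2H})^K$ contributions cancel and we obtain
\begin{equation*}
\E[U_tU_s] = \frac{1}{2^K}\bigl(t^{2HK}+s^{2HK}-|t-s|^{2HK}\bigr).
\end{equation*}

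For the right-hand side, \eqref{cov-fbm} applied to fBm with Hurst parameter $HK$ yields
\begin{equation*}
\E\!\left[\bigl(2^{(1-K)/2}B_t^{HK}\bigr)\bigl(2^{(1-K)/2}B_s^{HK}\bigr)\right]
= 2^{1-K}\cdot\tfrac12\bigl(t^{2HK}+s^{2HK}-|t-s|^{2HK}\bigr),
\end{equation*}
which equals $2^{-K}(t^{2HK}+s^{2HK}-|t-s|^{2HK})$, matching $\E[U_tU_s]$.

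Since both processes are centered and Gaussian with identical covariance, they have the same finite-dimensional distributions, proving the equality in law. There is no real obstacle here beyond bookkeeping; the only delicate point is the verification that the normalizing constant $c$ was chosen so that $c^2\Gamma(1-K)/K = 2^{-K}$, which makes the middle term $(t^{2H}+s^{2H})^K$ cancel between the two contributions on the left-hand side. Everything else is routine substitution.
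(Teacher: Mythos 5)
Your proof is correct, and the algebra checks out: with $\alpha=K/2$ in \eqref{cov-Y} one gets $\E[Y^{K/2}_{t^{2H}}Y^{K/2}_{s^{2H}}]=\frac{\Gamma(1-K)}{K}\bigl(t^{2HK}+s^{2HK}-(t^{2H}+s^{2H})^{K}\bigr)$, the constant $c^2=\frac{K}{2^K\Gamma(1-K)}$ turns the prefactor into $2^{-K}$ so the $(t^{2H}+s^{2H})^K$ terms cancel against \eqref{cov-bifbm}, and joint Gaussianity of the sum follows from the independence of $Z^{H,K}$ and $W$. Note that the paper itself gives no proof of this proposition --- it is quoted from Lei and Nualart \cite{ln} --- and your direct comparison of covariance functions is precisely the standard argument used there, so there is nothing to flag.
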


 Denote the RKHSs of $\left\{B^H_t, 0\leq t\leq 1\right\}$ with parameter $H$ and $\left\{Z^{H,K}_t, 0\leq t\leq 1\right\}$ with parameters $H$ and  $K$ by $\bh(B^H)$ and $\bh(Z^{H,K})$ respectively. For bi-fBm $Z^{H,K}$, unlike fBm, we don't have an explicit representation for its RKHS when $K\in (0,1)$. However, we have the following property.
 
 \begin{proposition}\label{prop-bi}
For $H, K\in (0,1),$ $\bh(B^H)\subseteq \bh(Z^{H,K})\subseteq \bh(B^{HK}) $. In particular, \\ $I_{0+}^{\frac32}(L^2[0,1])\subseteq \bh(B^H)\subseteq\bh(Z^{H,K}).$    
\end{proposition}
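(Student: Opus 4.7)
The plan is to establish the two RKHS inclusions separately via covariance domination, and then to deduce the ``in particular'' clause from the semigroup property of fractional integration. Throughout, I rely on the abstract fact that for centered Gaussian processes with covariance kernels $R_1, R_2$, the positive-definite domination $R_1 \le C R_2$ implies the RKHS inclusion $\bh(R_1) \subseteq \bh(R_2)$.

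The inclusion $\bh(Z^{H,K}) \subseteq \bh(B^{HK})$ follows directly from Proposition \ref{decom-bi}. Setting $c_1 = \sqrt{K/(2^K\Gamma(1-K))}$ and $c_2 = 2^{(1-K)/2}$, equating covariances in the independent decomposition $c_2 B^{HK} \overset{d}{=} Z^{H,K} + c_1 Y^{K/2}_{t^{2H}}$ yields
\[
c_2^2 R_{B^{HK}}(s,t) \;=\; R_{Z^{H,K}}(s,t) \;+\; c_1^2\, \E\!\left[Y^{K/2}_{s^{2H}} Y^{K/2}_{t^{2H}}\right],
\]
and dropping the positive-definite second term gives $R_{Z^{H,K}} \le c_2^2 R_{B^{HK}}$. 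Since scaling a centered Gaussian process by a positive constant leaves its RKHS unchanged as a set, the claimed inclusion follows.

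The inclusion $\bh(B^H) \subseteq \bh(Z^{H,K})$ is what I expect to be the main obstacle. Concretely, I need a constant $C>0$ so that $R_{B^H} \le C R_{Z^{H,K}}$ in the positive-definite sense, namely
\[
\Var\!\left(\sum_{i=1}^n a_i B^H_{t_i}\right) \;\le\; C\, \Var\!\left(\sum_{i=1}^n a_i Z^{H,K}_{t_i}\right)
\]
for every $n\ge 1$, $(t_i)\subset[0,1]$, $(a_i)\subset\R$. The two-point sanity check is already instructive: since $H\ge HK$ and $|t-s|\le 1$ give $|t-s|^{2H}\le|t-s|^{2HK}$, and since the local nondeterminism of $Z^{H,K}$ (Remark \ref{rmk-fbm-sub-bi} and \cite{tx}) yields $\Var(Z^{H,K}_t-Z^{H,K}_s)\ge c|t-s|^{2HK}$, the $n=2$ comparison holds. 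Upgrading this to the full positive-definite inequality is the delicate step; I would attempt either (a) to exhibit an alternative Gaussian decomposition $Z^{H,K} \overset{d}{=} \lambda B^H + V$ with $V$ an independent centered Gaussian process and $\lambda>0$, reducing this inclusion to the argument used for the other inclusion, or (b) to compare Volterra-type representations of $B^H$ and $Z^{H,K}$ as Wiener integrals against a common Brownian motion, which translates positive-definite domination into a bounded-operator estimate between $L^2$-spaces of Volterra kernels.

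Finally, $I_{0+}^{3/2}(L^2[0,1])\subseteq \bh(B^H)$ is immediate from the identification $\bh(B^H) = I_{0+}^{H+1/2}(L^2[0,1])$ recorded in Section \ref{section4.1}. Since $H\in(0,1)$ gives $1-H\in(0,1)$, the operator $I_{0+}^{1-H}$ continuously maps $L^2[0,1]$ into itself (by the Hardy--Littlewood--Sobolev inequality, or by a direct Cauchy--Schwarz estimate on the bounded interval), and the semigroup identity $I_{0+}^{3/2}=I_{0+}^{H+1/2}\circ I_{0+}^{1-H}$ yields $I_{0+}^{3/2}(L^2[0,1]) = I_{0+}^{H+1/2}(I_{0+}^{1-H}(L^2[0,1])) \subseteq I_{0+}^{H+1/2}(L^2[0,1]) = \bh(B^H)$, which combined with the first inclusion gives the ``in particular'' statement.
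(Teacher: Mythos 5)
Two of your three steps are correct and essentially coincide with the paper's argument. For $\bh(Z^{H,K})\subseteq \bh(B^{HK})$ you use the Lei--Nualart decomposition (Proposition \ref{decom-bi}) to write $c_2^2 R_{B^{HK}}=R_{Z^{H,K}}+c_1^2 R_{\tilde Y}$ and then invoke covariance domination; the paper does exactly this, quoting Aronszajn's Theorem I for the step ``$R=R_1+R_2$ with $R_i$ positive definite implies $\bh(R_1)\subseteq\bh(R)$.'' For $I_{0+}^{3/2}(L^2[0,1])\subseteq \bh(B^H)$ your use of the semigroup identity $I_{0+}^{3/2}=I_{0+}^{H+1/2}\circ I_{0+}^{1-H}$ together with the $L^2$-boundedness of $I_{0+}^{1-H}$ is precisely the paper's argument (via \cite[Theorems 2.5 and 2.6]{skm}).

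The genuine gap is the inclusion $\bh(B^H)\subseteq \bh(Z^{H,K})$, which you correctly identify as the main obstacle but do not prove. The paper does not prove it either from scratch: it cites \cite[Theorem 5.2]{al} (Alpay--Levanony), where the required positive-definite domination $R_{B^H}\le C\,R_{Z^{H,K}}$ is established. Neither of your two proposed routes closes the gap as stated. Route (a) would need an independent Gaussian decomposition $Z^{H,K}\overset{d}{=}\lambda B^H+V$; no such decomposition is known (the available decomposition, Proposition \ref{decom-bi}, runs in the opposite direction, expressing $B^{HK}$ rather than $Z^{H,K}$ as the sum), so this is not a reduction to the previous case but an open subproblem at least as hard as the original. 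Route (b) is only a reformulation: translating the inclusion into a bounded-operator estimate between Volterra kernels is equivalent to the domination you are trying to prove, and you give no argument for that estimate. Note also that your $n=2$ ``sanity check'' only controls the increment variance $\Var(Z^{H,K}_t-Z^{H,K}_s)$, i.e., the coefficient choice $a_1=1$, $a_2=-1$; the two-point positive-definite comparison requires $\Var(a_1B^H_{t_1}+a_2B^H_{t_2})\le C\,\Var(a_1Z^{H,K}_{t_1}+a_2Z^{H,K}_{t_2})$ for all $(a_1,a_2)$, which does not follow from the increment bound alone. To complete the proof you should either cite the Alpay--Levanony result, as the paper does, or supply a full proof of the domination $R_{B^H}\le C\,R_{Z^{H,K}}$.
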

\begin{proof}
The fact $\bh(B^H)\subseteq \bh(Z^{H,K}) $ follows from \cite[Theorem 5.2]{al}.

Next, we will show $\bh(Z^{H,K})\subseteq \bh(B^{HK})$. Note from \eqref{cov-fbm}, \eqref{cov-Y} and \eqref{cov-bifbm} that 
\begin{align*}
\E\left(B_t^{2HK}B_s^{2HK}\right)=&\frac{2^K}{2}\E\left(Z_t^{H,K}Z_s^{H,K}\right)+\frac{K}{2\Gamma(1-K)}\E\left(Y^K_{t^{2H}}Y_{s^{2H}}^K\right).
\end{align*} 
It follows from Theorem I in \cite[p. 354]{a} that the $\mathbb{H}\left(\sqrt{\frac{2^K}{2}}Z^{H,K}\right)=\mathbb{H}(Z^{H,K})\subseteq \mathbb{H}(B^{HK})$.

For any $0\le \alpha\le \beta$, we have $I_{0+}^{\beta}(L^2[0,1])\subseteq I_{0+}^{\alpha}(L^2[0,1])$. Indeed, for any $f \in I_{0+}^{\beta}(L^2[0,1])$, there exists $g\in L^2[0,1]$ such that $f=I_{0+}^{\beta}(g)$. Then by \cite[Theorem 2.5]{skm}, we get $f=I_{0+}^{\alpha}(I_{0+}^{\beta-\alpha}(g))$. In addition, $I_{0+}^{\beta-\alpha}(g)\in L^2[0,1]$ by \cite[Theorem 2.6]{skm}, we can show that $f=I_{0+}^{\beta}(g)=I_{0+}^{\alpha}(I_{0+}^{\beta-\alpha}(g))$ belongs to $I_{0+}^{\alpha}(L^2[0,1])$. Consequently, 
the relationship $I_{0+}^{\frac32}(L^2[0,1])\subseteq \bh(B^H)$ holds by noting $\bh(B^H)=I_{0+}^{H+\frac12}(L^2[0,1])$. 
  \hfill 
\end{proof}

\bigskip

\subsection{Large deviation results}

Throughout this subsection, let $B^H$, $S^H$ and $Z^{H,K}$ be $d$-dimensional processes, and for a general $d$-dimensional Gaussian process $X$, denote 
\[
\L^\gamma_t(X)=\int_0^t \gamma(X_s)ds,
\] where $\gamma(x)$ is given in Section \ref{section2}. In particular, when $\gamma=\delta$, $Hd<1$ and $HKd<1$, from Remarks \ref{rmk-fbm-sub-bi} and \ref{rmk-RL}, the local times $L_t(B^H)=\L_t^\delta(B^H)$, $L_t(S^H)=\L_t^\delta(S^H)$ and $L_t(Z^{H,K})=\L_t^\delta(Z^{H,K})$ exist.

 Due to the self-similarity possessed by $B^H$, $S^H$, $Z^{H,K}$ and the homogeneity  $\gamma(ax)=a^{-\beta}\gamma(x)$ for $a>0$,   the following scaling property holds: for any $a>0$,
\begin{align}
 \L_{at}^\gamma(B^H) \overset{d}{=}&\ a^{1-H\beta} \L_t^\gamma(B^H),\label{scaling-1}\\
\L_{at}^\gamma(S^H) \overset{d}{=}&\ a^{1-H\beta} \L_t^\gamma(S^H), \label{scaling-2}\\
  \L_{at}^\gamma(Z^{H,K}) \overset{d}{=}&\ a^{1-HK\beta} \L_t^\gamma(Z^{H,K}).\label{scaling-3}
\end{align}

We first provide the following large deviation result for $\L_t^\gamma(B^H)$, which will be used later to obtain the large deviation for $\L_t^\gamma(S^H)$ with $H\in(0,\frac12)$ and $Hd<1$,  and $\L_t^\gamma(Z^{H,K}) $ with $H\in(0,1)$, $K\in(0,1)$ and $HKd<1$.

\begin{theorem}\label{thm-fbm}
Assume $H\in(0,1)$ and $H\beta<1,$ then
\begin{equation}\label{ldp-fbm-exp}
\lim_{t\to\infty}\frac1t\log \E\exp\left(\theta \L_t^\gamma(B^H)  \right)=\mathcal E(\gamma, H,\beta)\, \alpha_H^{-\frac{\beta}{1-H\beta}}\, \theta^{\frac{1}{1-H\beta}}=:\mathcal{E}_1(\gamma, H,\beta)\, \theta^{\frac{1}{1-H\beta}}, 
\end{equation}
and consequently, 
\begin{equation}\label{ld-p-1}
\lim_{x\to\infty} \frac1{x^{\frac1{H\beta}}} \log \mathbb P(\L^\gamma_1(B^H)\ge x)=-C(\gamma,H,\beta), 
\end{equation}
where $\mathcal E(\gamma, H,\beta)$ is a positive constant given in Proposition \ref{prop-3.2}, the constant $\alpha_H$ is given in \eqref{dec-fbm},  $\mathcal{E}_1(\gamma, H,\beta)=\mathcal E(\gamma, H,\beta)\, \alpha_H^{-\frac{\beta}{1-H\beta}}$, and 
\begin{equation}\label{eq-C}
C(\gamma, \alpha,\beta)= \mathcal E_1(\gamma, \alpha, \beta)^{1-\frac{1}{H\beta}} (1-\alpha\beta)^{\frac1{\alpha\beta}-1}\alpha\beta.
\end{equation}

\end{theorem}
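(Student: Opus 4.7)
The plan is to exploit the Mandelbrot--Van Ness decomposition \eqref{dec-fbm}, which writes $B^H=\alpha_H X^H+\eta^H$ with $X^H$ the $d$-dimensional Riemann--Liouville process of parameter $H$ and $\eta^H$ independent of $X^H$, and to reduce \eqref{ldp-fbm-exp} to Proposition \ref{prop-3.2} by way of Proposition \ref{prop-ld}. To put things in the form required by Proposition \ref{prop-ld}, I would rescale so that the coefficient of $X^H$ equals $1$: set $\tilde\eta=\eta^H/\alpha_H$, so that $B^H/\alpha_H=X^H+\tilde\eta$, with $\tilde\eta$ still independent of $X^H$. Homogeneity of $\gamma$ then gives the key identity
\[
\L_t^\gamma(B^H)=\alpha_H^{-\beta}\,\L_t^\gamma(B^H/\alpha_H),
\]
which transfers a multiplicative change in the process to a multiplicative change in the parameter $\theta$.

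Next, I would verify the four hypotheses of Proposition \ref{prop-ld} with $X=X^H$ and $Y=X^H+\tilde\eta=B^H/\alpha_H$. Condition (i) holds with exponent $\alpha=H$ because both $X^H$ and $B^H$ are $H$-self-similar, (ii) and (iii) are immediate from the construction, and the crucial condition (iv) follows from Proposition \ref{RKHS-RV}: dividing the process $\eta^{\e,H}$ produced there by $\alpha_H$ gives a modification $\tilde\eta^\e$ that agrees with $\tilde\eta$ on $[\e,1]$ and whose sample paths lie in $\bh(X^H)$ almost surely. Combining Proposition \ref{prop-ld} with Proposition \ref{prop-3.2} evaluated at $\theta'=\theta\alpha_H^{-\beta}$, and applying the displayed identity, yields
\[
\lim_{t\to\infty}\frac1t\log\E\exp\bigl(\theta\L_t^\gamma(B^H)\bigr)=\mathcal E(\gamma,H,\beta)\bigl(\theta\alpha_H^{-\beta}\bigr)^{\frac{1}{1-H\beta}}=\mathcal E_1(\gamma,H,\beta)\,\theta^{\frac{1}{1-H\beta}},
\]
which is exactly \eqref{ldp-fbm-exp}.

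For \eqref{ld-p-1} the idea is to read off the tail from the cumulant generating function just obtained, together with the scaling \eqref{scaling-1}. The function $\Lambda(\theta)=\mathcal E_1\theta^{1/(1-H\beta)}$ is finite, convex and essentially smooth on $[0,\infty)$, so the G\"artner--Ellis theorem (Theorem \ref{thm-GE}) applied with $b(t)=t$ and $L_t=\L_t^\gamma(B^H)/t$ delivers the LDP for $L_t$ with good rate function $I(\lambda)=\sup_{\theta\geq 0}\{\theta\lambda-\mathcal E_1\theta^{1/(1-H\beta)}\}$, which by Theorem \ref{equiv} is equivalent to $\lim_{t\to\infty}t^{-1}\log\mathbb P(L_t\ge\lambda)=-I(\lambda)$. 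The scaling identity $\L_t^\gamma(B^H)/t\overset{d}{=}t^{-H\beta}\L_1^\gamma(B^H)$ then translates $\{L_t\ge\lambda\}$ into $\{\L_1^\gamma(B^H)\ge\lambda t^{H\beta}\}$; a change of variables $x=\lambda t^{H\beta}$ (and the specialization $\lambda=1$) converts this into a tail asymptotic of $\L_1^\gamma(B^H)$ at speed $x^{1/(H\beta)}$. A routine Fenchel--Legendre calculation gives
\[
I(1)=H\beta\,(1-H\beta)^{\frac{1}{H\beta}-1}\mathcal E_1(\gamma,H,\beta)^{1-\frac{1}{H\beta}}=C(\gamma,H,\beta),
\]
which is \eqref{eq-C}.

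The only step that is not purely formal is the verification of condition (iv) of Proposition \ref{prop-ld}: the path $t\mapsto\eta^H_t$ is singular at $t=0$ and does not by itself lie in $\bh(X^H)$ on $[0,1]$, so one cannot assert $\tilde\eta\in\bh(X^H)$ directly and must genuinely invoke the modification from Proposition \ref{RKHS-RV}. The rest is bookkeeping---tracking the factor $\alpha_H^{-\beta}$ from homogeneity through the $\theta^{1/(1-H\beta)}$ functional to produce $\alpha_H^{-\beta/(1-H\beta)}$, and a standard Legendre dual---so no substantive obstacle is expected beyond assembling the ingredients from Sections \ref{section2}, \ref{section4.1}, and the Riemann--Liouville case of Proposition \ref{prop-3.2}.
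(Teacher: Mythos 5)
Your proposal is correct and follows essentially the same route as the paper: the Mandelbrot--Van Ness decomposition \eqref{dec-fbm} together with the modification from Proposition \ref{RKHS-RV} to verify condition (iv) of Proposition \ref{prop-ld}, reduction to Proposition \ref{prop-3.2} with the factor $\alpha_H^{-\beta}$ absorbed into $\theta$ via homogeneity, and then G\"artner--Ellis plus the scaling \eqref{scaling-1} and a Legendre computation for \eqref{ld-p-1}. The only difference---normalizing to $B^H/\alpha_H=X^H+\tilde\eta$ rather than comparing $B^H$ with $\alpha_H X^H$ directly---is cosmetic, and your closed-form evaluation of $I(1)$ matches \eqref{eq-C}.
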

\begin{proof}
By the representation \eqref{dec-fbm} and Proposition \ref{RKHS-RV},   for any $\e\in(0,1)$, we may construct $\eta^\e$  such that $\eta^\e_t=\eta_t$ for $t\ge \e$, and  $\{\eta^\e_t, t\in[0,1]\}$ belongs to the RKHS $\left(I^{H+1/2}_{0+}(L^2[0,1])\right)^{d}$ of $\Big\{\alpha_HX^H_t, t\in[0,1]\Big\}$ almost surely. Thus, \eqref{ldp-fbm-exp} follows from Proposition \ref{prop-ld} and Proposition \ref{prop-3.2}. 

By the G\"artner-Ellis theorem (see Theorem \ref{thm-GE}), we have for $\lambda>0$
\begin{align}\label{sH-1}
\lim_{t\to\infty}\frac1t \log\mathbb P\left(\frac1{t}\, \L^\gamma_t(B^H) \ge \lambda\right)&=\sup_{\theta>0} \left\{ \theta \lambda- \mathcal E_1(\gamma, H, \beta) \theta^{\frac1{1-H\beta}}\right\}\notag\\
&= C(\gamma, H, \beta) \lambda^{\frac1{H\beta}},
\end{align}
where $C(\gamma, H, \beta)$ is given in \eqref{eq-C}. By scaling property \eqref{scaling-1} and a change of variables, we have
\begin{align}\label{sH-2}
\lim_{t\to\infty}\frac1t \log\mathbb P\left(\frac1{t}\, \L^\gamma_t(B^H) \ge \lambda\right)=&\ \lim_{t\to\infty}\frac1t \log\mathbb P\left(\frac1{t^{H\beta}}\, \L^\gamma_1(B^H) \ge \lambda\right)\notag\\
=&\ \lim_{x\to\infty} \frac{\lambda^{\frac1{H\beta}}}{x^{\frac1{H\beta}}} \log \mathbb P(\L^\gamma_1(B^H)\ge x).
\end{align}
Hence, equation \eqref{ld-p-1} follows from \eqref{sH-1} and \eqref{sH-2}.\hfill
\end{proof}

\begin{remark}\label{rem-4.6}
When $\gamma(x)=\delta(x)$ (in this case $\beta=d$), we can prove Theorem \ref{thm-fbm} directly based on the result in \cite[Theorem 2.1]{clrs}. Moreover, we have
 \[
\mathcal E_1(\delta, H,\beta)=C(H,d)^{-\frac{Hd}{1-Hd}}\left[(Hd)^{\frac{Hd}{1-Hd}}-(Hd)^{\frac{1}{1-Hd}}\right],
\]
 where $C(H,d)$ is given by (4.6) in \cite{clrs} and satisfies
\begin{equation}\label{chd}
 \left(\frac{\pi c_H^2}{H}\right)^{\frac1{2H}} \varphi(Hd)\le C(H,d)\le  \left(2\pi\right)^{\frac1{2H}} \varphi(Hd),
\end{equation}
 with $c_H=\frac{\sqrt{2H}2^H}{\sqrt{B(1-H,H+1/2)}}$, $
B(\cdot, \cdot)$ being the beta function and $$\varphi(x)=\frac{x (1-x)^{\frac{1-x}{x}}
}{{\Gamma(1-x)^{\frac1{x}}}}.$$ 

\begin{proof}
It follows from Theorem 2.1 in \cite{clrs} that 
\begin{equation}\label{ld-p'}
\lim_{x\to\infty} \frac1{x^{\frac{1}{Hd}}} \log \mathbb P(L_1(B^H)\ge x)=-C(H,d), \end{equation} 
and $C(H,d)$ satisfies \eqref{chd}.

Fixing a constant $\lambda >0$ and  letting $x=u\lambda$,  we have
$$\lim_{u\to\infty} \frac1{u^{\frac1{Hd}}} \log \mathbb P\left(\frac1u\, L_1(B^H)\ge \lambda\right)=-C(H,d)\lambda^{\frac1{Hd}}. $$
Now, by Varadhan's integral lemma  (see Lemma \ref{lem-vil}),  we get,  for all $\theta>0,$ 
\begin{align*}
\lim_{u\to\infty} \frac1{u^{\frac1{Hd}}}\log \E \exp\left(\theta u^{\frac1{Hd}-1} L_1 (B^H)\right)&=\sup_{\lambda >0}\{\lambda \theta-C(H,d) \lambda^{\frac1{Hd}}\} \\
&=C(H,d)^{-\frac{Hd}{1-Hd}}\left[(Hd)^{\frac{Hd}{1-Hd}}-(Hd)^{\frac{1}{1-Hd}}\right]  \theta^{\frac{1}{1-Hd}}.
\end{align*}
On the other hand, a change of variables and the scaling property \eqref{scaling-1} yield
\begin{align*}
\lim_{u\to\infty} \frac1{u^{\frac1{Hd}}}\log \E \exp\left(\theta u^{\frac1{Hd}-1} L_1 (B^H)\right)=&\ \lim_{t\to\infty}\frac1t\log \E \exp\left(\theta t^{1-Hd} L_1 (B^H)\right)\\
=&\ \lim_{t\to\infty}\frac1t\log \E \exp\left(\theta  L_t (B^H)\right).
\end{align*}
 Therefore, the desired result follows from the above equalities.
\hfill
\end{proof}
\end{remark}

The following result is the large deviations for sub-fBm and bi-fBm. 

\begin{theorem}\label{thm-sfbm}
\begin{itemize}
\item[(i)] Assume $H\in(0,\frac12)$ and $H\beta<1,$ then
\begin{equation}\label{ld-e}
\lim_{t\to\infty}\frac1t\log \E\exp\left(\theta \L^\gamma_t(S^H)  \right)=\mathcal E_1(\gamma, H,\beta)\, \theta^{\frac{1}{1-H\beta}},  
\end{equation}
and consequently, 
\begin{equation}\label{ld-p}
\lim_{x\to\infty} \frac1{x^{\frac1{H\beta}}} \log \mathbb P(\L^\gamma_1(S^H)\ge x)=-C(\gamma,H,\beta), 
\end{equation}
where the constants $\mathcal E_1(\gamma, \alpha,\beta)$ and $C(\gamma,H,\beta)$ are given in Theorem \ref{thm-fbm}. 
\item[(ii)] Assume $H, K\in(0,1)$ and $HK\beta<1,$ then
\begin{equation}\label{ld-e'}
\lim_{t\to\infty}\frac1t\log \E\exp\left(\theta \L^\gamma_t(Z^{H,K})  \right)=2^{-\frac{(1-K)\beta}{2(1-HK\beta)}}\mathcal E_1(\gamma, HK,\beta)\, \theta^{\frac{1}{1-HK\beta}},  
\end{equation}
and consequently, 
\begin{equation}\label{ld-p'}
\lim_{x\to\infty} \frac1{x^{\frac1{HK\beta}} }\log \mathbb P(\L^\gamma_1(Z^{H,K})\ge x)=-2^{\frac{(1-K)\beta}{2HK\beta}}C(\gamma,HK,\beta).
\end{equation}

\end{itemize}
\end{theorem}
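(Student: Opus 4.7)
The overall strategy is to apply the comparison principle (Proposition \ref{prop-ld}) in both parts, combining it with Theorem \ref{thm-fbm} (the large deviations for fBm) together with the decompositions of sub-fBm and bi-fBm recorded in Propositions \ref{decom-sub} and \ref{decom-bi}. In each part the exponential-moment limit for $\L^\gamma_t$ will be transferred between two self-similar Gaussian processes via the comparison, and then the tail large deviation statement will be extracted using the G\"artner-Ellis theorem (Theorem \ref{thm-GE}) together with the appropriate scaling property.

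For part (i), I would set $X=B^H$, $Y=S^H$ and $\eta_t=\sqrt{H(1-2H)/\Gamma(2-2H)}\,Y^H_t$, so that Proposition \ref{decom-sub} gives $Y\overset{d}{=}X+\eta$ with $\eta$ independent of $X$; both $B^H$ and $S^H$ are self-similar of index $H$, so condition (i) of Proposition \ref{prop-ld} holds. The main task is to verify condition (iv). From \eqref{eq-y}, for $t>0$ and $n\ge 1$,
\[
\partial_t^{\,n} Y^H_t=\int_0^\infty r^{\,n-H-\frac12}e^{-rt}\,dW_r
\]
has finite variance since $\int_0^\infty r^{2n-2H-1}e^{-2rt}dr<\infty$, so $Y^H$ has $C^\infty$ paths on $(0,\infty)$ almost surely. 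Hence for each $\e\in(0,1)$ we extend $\{\eta_t,\,t\ge\e\}$ to an a.s.\ $C^\infty$ process $\eta^\e$ on $[0,1]$ vanishing to sufficient order at $0$, so that $\eta^\e\in\bigl(I^{3/2}_{0+}(L^2[0,1])\bigr)^{d}$; the inclusion $I^{3/2}_{0+}(L^2[0,1])\subseteq\bh(B^H)$ in Proposition \ref{prop-bi} then puts $\eta^\e$ in the $d$-dimensional RKHS of $B^H$ almost surely. Proposition \ref{prop-ld} combined with Theorem \ref{thm-fbm} yields \eqref{ld-e}, and \eqref{ld-p} follows by the G\"artner-Ellis theorem together with the scaling relation \eqref{scaling-2}, in the same way that \eqref{ld-p-1} was extracted from \eqref{ldp-fbm-exp} in the proof of Theorem \ref{thm-fbm}.

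For part (ii), I would take $X=Z^{H,K}$, $Y=2^{(1-K)/2}B^{HK}$ and $\eta_t=\sqrt{K/(2^{K}\Gamma(1-K))}\,Y^{K/2}_{t^{2H}}$, so that Proposition \ref{decom-bi} gives $Y\overset{d}{=}X+\eta$ with $\eta$ independent of $X$; both $X$ and $Y$ are self-similar of index $HK$. As in part (i), $t\mapsto Y^{K/2}_{t^{2H}}$ has $C^\infty$ paths on $(0,\infty)$ a.s.\ by differentiation under the stochastic integral, so an $\e$-truncation yields $\eta^\e\in\bigl(I^{3/2}_{0+}(L^2[0,1])\bigr)^{d}$ almost surely, which by the full chain $I^{3/2}_{0+}(L^2[0,1])\subseteq\bh(B^H)\subseteq\bh(Z^{H,K})$ from Proposition \ref{prop-bi} lies in the RKHS of $Z^{H,K}$. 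Proposition \ref{prop-ld} therefore gives
\[
\lim_{t\to\infty}\frac1t\log\E\exp\bigl(\theta\L^\gamma_t(Z^{H,K})\bigr)=\lim_{t\to\infty}\frac1t\log\E\exp\bigl(\theta\L^\gamma_t(2^{(1-K)/2}B^{HK})\bigr).
\]
Using the homogeneity $\gamma(ax)=a^{-\beta}\gamma(x)$ to peel off the constant, $\L^\gamma_t(2^{(1-K)/2}B^{HK})=2^{-(1-K)\beta/2}\L^\gamma_t(B^{HK})$, and then applying Theorem \ref{thm-fbm} to $B^{HK}$ with $\theta$ replaced by $2^{-(1-K)\beta/2}\theta$, we obtain \eqref{ld-e'}; finally \eqref{ld-p'} follows from the G\"artner-Ellis theorem and the scaling \eqref{scaling-3} exactly as in the proof of Theorem \ref{thm-fbm}.

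The principal technical obstacle is the explicit construction of $\eta^\e$ and the verification that it lies in the RKHS of the comparison process almost surely. Both auxiliary processes $Y^H_t$ and $Y^{K/2}_{t^{2H}}$ are smooth on $(0,\infty)$ but their derivatives blow up at $t=0$, preventing a direct appeal to the variant $(iv')$; replacing them on $[0,\e]$ by a smooth a.s.\ path matching the original value and derivatives at $t=\e$ and vanishing to high enough order at $t=0$, and then quoting the fractional-integral description $\bh(X^H)=I^{H+1/2}_{0+}(L^2[0,1])$ together with Proposition \ref{prop-bi}, is what produces the required RKHS membership and closes the argument.
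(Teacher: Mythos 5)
Your proposal is correct and follows essentially the same route as the paper: the same decompositions from Propositions \ref{decom-sub} and \ref{decom-bi}, the same $\varepsilon$-truncation of the smooth perturbation placed into $\bigl(I^{3/2}_{0+}(L^2[0,1])\bigr)^d$ and then into the relevant RKHS via Proposition \ref{prop-bi}, the same peeling off of the constant $2^{(1-K)/2}$ by homogeneity of $\gamma$, and the same passage to the tail asymptotics via G\"artner--Ellis and scaling. The only cosmetic difference is that the paper writes the $[0,\varepsilon]$-modification explicitly as a cubic polynomial matching $Y^H_\varepsilon$ and its derivative (its equation \eqref{eq3.8}), whereas you describe the construction abstractly; this is immaterial.
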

\begin{proof} Let us first prove \eqref{ld-e}.  By Propositions \ref{decom-sub} and \ref{prop-ld}, and together with the fact $\left(I_{0+}^{3/2}(L^2[0,1])\right)^d\subseteq \left(I_{0+}^{H+\frac12}(L^2[0,1])\right)^d$, it suffices to show that  for any $0<\e<1$, there exists a process  $\eta^\e$ such that  $\eta^\e_t=Y^H_t$ for $t\ge \e$, and  $\{\eta^\e_t, 0\leq t\leq 1\}$ belongs to $\left(I_{0+}^{3/2}(L^2[0,1])\right)^d$ almost surely.

  Note that $Y_t^H=\int_0^t M^H_s ds$, where $M^H_s=\int_0^\infty u^{\frac12-H}e^{-us}dW_u$ and $M^H$ has a version denoted again by $M^H$ that is infinitely differentiable on $(0,\infty)$ (see \cite[Theorem 1]{ln}). We construct $\eta^\e$ by following the proof of \cite[Proposition 3.5]{clrs} as follows: 
\begin{equation}\label{eq3.8}
\eta^\e_t=\begin{cases}
a_1 t^2+ a_2 t^3, & 0\le t\le \e,\\
Y^H_t, & t>\e,
\end{cases}
\end{equation}
where $a_1= 3\e^{-2} Y^H_\e- \e^{-1} M^H_\e$ and $a_2=-2\e^{-3}Y^H_\e+\e^{-2} M^H_\e.$ It is easy to verify that $\eta^\e$ and $(\eta^\e)'$ exist as continuous functions on $[0,1]$ with $\eta^\e_0=(\eta^\e)'_0=0$, and $(\eta^\e)''\in L^2[0,1]$. Then, by \cite[Proposition 3.4]{clrs},  we obtain $\{\eta^\e_t,t\in[0,1]\}\in \left(I_{0+}^{3/2}(L^2[0,1])\right)^d$ a.s. Noting that $\left(I_{0+}^{3/2}(L^2[0,1])\right)^d\subseteq\left( I^{H+\frac12}_{0+}(L^2[0,1])\right)^d=\mathbb{H}(B^H)$, we have $\{\eta^\e_t,t\in[0,1]\}\in \mathbb{H}(B^H)$ a.s.  Hence,  equation \eqref{ld-e} follows from Theorem \ref{thm-fbm} and Propositions \ref{prop-ld} and \ref{decom-sub}.

Next, we will prove \eqref{ld-e'} by an analogue to the proof of \eqref{ld-e}. Since  $\{Y_{t}^{K/2}=\int_0^t M^{K/2}_s ds,  t\geq 0\}$ is  infinitely differentiable on $(0,\infty)$ (see \cite[Theorem 1]{ln}), we obtain that 
 \[
 \tilde Y_t^{H,K} := Y_{t^{2H}}^{K/2}=\int_0^t2Hs^{2H-1}M^{K/2}_{s^{2H}}\, ds=:\int_0^t \tilde M_s^{H,K}ds 
 \]
 is  infinitely differentiable on $(0,\infty)$.
  Thus we can construct $\tilde \eta^\e$ in the same way as for $\eta^\e$ in \eqref{eq3.8} by replacing $Y^H$ with $\tilde Y^{H,K}$ and replacing $M^H$ with $\tilde M^{H,K}$. For the process $\tilde \eta^\e$ we have  $\tilde\eta^\e_t=\tilde Y_t^{H,K}$ for $t\ge \e$, and  almost surely $\{\tilde\eta^\e_t, t\in[0,1]\} \in \left(I_{0+}^{3/2}(L^2[0,1])\right)^d$ which belongs to  $\bh(Z^{H,K}) $ by Proposition \ref{prop-bi}.  Therefore, we can obtain \eqref{ld-e'} follows by Theorem \ref{thm-fbm} and  Propositions \ref{decom-bi} and \ref{prop-ld} and the homogeneity of $\gamma(\cdot)$. 
  
  Using the same argument as in the proof of equation \eqref{ld-p-1}, we can prove \eqref{ld-p} and  \eqref{ld-p'} respectively.   \hfill
\end{proof}

\begin{remark}
There are other examples of self-similar Gaussian processes $X$ for which $\L^\gamma_t(X)$ has large deviations. For instance, the Gaussian process $X$ with parameters $H\in(0,1)$ and $\lambda\in(0,H)$ introduced in \cite{HN} can be decomposed in law as the sum of a fBm (up to a factor of a constant) and the Gaussian process $Y$ defined in \eqref{eq-y} (up to a factor of a constant) independent of the fBm. Hence, using the same technique in dealing with sub-fBm, we can show the large deviations for $\L^\gamma_t(X)$. In the case $H\in(\frac12,1)$, the Gaussian process $X$ has the same law (up to a constant) as the process $u(t,0)$, where $u(t,x)$ is the solution to the following stochastic heat equation 
\[
\begin{cases}
\frac{\partial u}{\partial t}=\frac12 \Delta u+\dot{W}(t,x), \ t\geq 0, \ x\in\mathbb{R}^d,\\
u(0,x)=0,
\end{cases}
\]
with $\dot{W}$ being a zero mean Gaussian field with a covariance of the form
\[
\E\left(\dot{W}(t,x)\dot{W}(s,y)\right)=|s-t|^{2H-2}|x-y|^{-\beta},
\]
where $0<\beta<\min\{d,2\}$.
\end{remark}
\bigskip
As an application of the results in Theorems \ref{thm-fbm} and \ref{thm-sfbm}, we can show the following critical exponential integrability.
\begin{corollary}
\begin{itemize}
\item[(i)] Assume $H\in(0,1)$ and $H\beta<1$. We have 
\begin{itemize}
\item[(a)] when $p<\frac1{H\beta}$, $\E \exp\left( \lambda \left(\L^\gamma_1(B^H)\right)^p \right)<\infty$ for all $\lambda >0$;
\item[(b)] when $p>\frac1{H\beta}$, $\E \exp\left(  \lambda \left(\L^\gamma_1(B^H)\right)^p \right)=\infty$ for all $\lambda >0$;
\item[(c)] when $p=\frac1{H\beta}$, $\E \exp\left( \lambda \left(\L^\gamma_1(B^H)\right)^p\right)<\infty$ for $\lambda<C(\gamma, H,\beta)$, 

and $\E \exp\left( \lambda (L_1(B^H))^p \right)=\infty$ for $\lambda>C(\gamma, H,\beta)$,

where $C(\gamma,H,\beta)$ is given in \eqref{eq-C}.  
\end{itemize}
\item[(ii)] Assume $H\in(0,\frac12)$ and $H\beta<1$. We have 
\begin{itemize}
\item[(a)] when $p<\frac1{H\beta}$, $\E \exp\left( \lambda \left(\L^\gamma_1(S^H)\right)^p \right)<\infty$ for all $\lambda >0$;
\item[(b)] when $p>\frac1{H\beta}$, $\E \exp\left(  \lambda \left(\L^\gamma_1(S^H)\right)^p \right)=\infty$ for all $\lambda >0$;
\item[(c)] when $p=\frac1{H\beta}$, $\E \exp\left( \lambda \left(\L^\gamma_1(S^H)\right)^p\right)<\infty$ for $\lambda<C(\gamma, H,\beta)$,  

and $\E \exp\left( \lambda (L_1(S^H))^p \right)=\infty$ for $\lambda>C(\gamma, H,\beta)$.
\end{itemize}

\item[(iii)] Assume $H, K\in(0,1)$ and $HK\beta<1$. We have 
\begin{itemize}
\item[(a)] when $p<\frac1{HK\beta}$, $\E \exp\left( \lambda \left(\L^\gamma_1(Z^{H,K})\right)^p \right)<\infty$ for all $\lambda >0$;
\item[(b)] when $p>\frac1{HK\beta}$, $\E \exp\left( \lambda \left(\L^\gamma_1(Z^{H,K})\right)^p \right)=\infty$ for all $\lambda >0$;
\item[(c)] when $p=\frac1{HK\beta}$, $\E \exp\left( \lambda \left(\L^\gamma_1(Z^{H,K})\right)^p \right)<\infty$ for $\lambda<2^{\frac{p(1-K)\beta}{2}}C(\gamma,HK,\beta)$,

 and $\E \exp\left( \lambda \left(\L^\gamma_1(Z^{H,K})\right)^p \right)=\infty$ for $\lambda>2^{\frac{p(1-K)\beta}{2}}C(\gamma, HK,\beta)$.
\end{itemize}
\end{itemize}
\end{corollary}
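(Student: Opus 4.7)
All three parts (i), (ii), (iii) reduce to the same Tauberian-style integral comparison, applied to the two-sided tail asymptotics \eqref{ld-p-1}, \eqref{ld-p} and \eqref{ld-p'} established in Theorems \ref{thm-fbm} and \ref{thm-sfbm}. I describe the argument for (i); parts (ii) and (iii) are obtained by copying it verbatim, replacing $H\beta$ by $H\beta$ (for sub-fBm) or by $HK\beta$ and the constant $C(\gamma,H,\beta)$ by $2^{(1-K)\beta/(2HK\beta)}C(\gamma,HK,\beta)$ (for bi-fBm), the latter reorganizing at the critical exponent $p=1/(HK\beta)$ into the stated threshold $2^{p(1-K)\beta/2}\,C(\gamma,HK,\beta)$.

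The starting point is the Fubini identity
\[
\E\exp(\lambda F^p) = 1 + \int_0^\infty \lambda p\, x^{p-1}e^{\lambda x^p}\,\mathbb P(F\ge x)\,dx,
\]
valid for any nonnegative random variable $F$ and any $\lambda,p>0$. I apply this with $F=\L^\gamma_1(B^H)$. By \eqref{ld-p-1}, for every $\varepsilon>0$ there exists $x_0$ such that, for all $x\ge x_0$,
\[
\exp\bigl(-(C+\varepsilon)x^{1/(H\beta)}\bigr)\le \mathbb P\bigl(\L^\gamma_1(B^H)\ge x\bigr)\le \exp\bigl(-(C-\varepsilon)x^{1/(H\beta)}\bigr),
\]
where $C=C(\gamma,H,\beta)$ is defined in \eqref{eq-C}.

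Now split into the three regimes for $p$. If $p<1/(H\beta)$, the upper tail bound gives an integrand dominated by $\lambda p\,x^{p-1}\exp\bigl(\lambda x^p-(C-\varepsilon)x^{1/(H\beta)}\bigr)$, whose exponent tends to $-\infty$ as $x\to\infty$ for every $\lambda>0$, so the integral is finite and (a) follows. If $p>1/(H\beta)$, the matching lower tail bound forces the integrand to exceed $\lambda p\,x^{p-1}\exp\bigl(\lambda x^p-(C+\varepsilon)x^{1/(H\beta)}\bigr)$ for $x\ge x_0$, whose exponent tends to $+\infty$ for every $\lambda>0$, yielding $\E\exp(\lambda F^p)=\infty$ and (b). At the critical value $p=1/(H\beta)$, both exponents collapse to $(\lambda-C\mp\varepsilon)\,x^{1/(H\beta)}$: for $\lambda<C$, picking $\varepsilon<C-\lambda$ makes the upper bound integrable, proving finiteness; for $\lambda>C$, picking $\varepsilon<\lambda-C$ makes the lower bound grow to $+\infty$, proving divergence. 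This gives (c).

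The only ingredient that is not entirely routine is the \emph{lower} bound on the tail, which is needed for (b) and for the divergent half of (c); this is provided precisely because the limits in \eqref{ld-p-1}, \eqref{ld-p} and \eqref{ld-p'} are two-sided (not just $\limsup$). Beyond that, the argument is pure bookkeeping, so I do not expect a substantive obstacle.
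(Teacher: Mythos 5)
Your proposal is correct and follows essentially the same route as the paper: the paper also reduces each part to a Fubini identity $\E \exp\left( \lambda (\L^\gamma_1(B^H))^p \right)-1=\int_0^\infty \mathbb P\left(\L^\gamma_1(B^H)\ge (\lambda^{-1}y)^{1/p}\right) e^{y}dy$ (your version is the same integral after the substitution $y=\lambda x^p$) and then invokes the two-sided tail asymptotics \eqref{ld-p-1}, \eqref{ld-p}, \eqref{ld-p'}. You simply spell out the three-regime bookkeeping that the paper leaves implicit, and your identification of the critical constant in (iii)(c) via $2^{p(1-K)\beta/2}=2^{(1-K)\beta/(2HK\beta)}$ at $p=1/(HK\beta)$ matches the paper.
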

\begin{proof}
By Fubini's theorem, we have
$$\E \exp\left( \lambda (\L^\gamma_1(B^H))^p \right)-1=\int_0^\infty \mathbb P\left(\L^\gamma_1(B^H)\ge (\lambda^{-1}y)^{1/p}  \right) e^{y}dy.$$
Then, the desired result for $\E \exp\left( \lambda (\L^\gamma_1(B^H))^p \right)$ follows from \eqref{ld-p-1}.

 Similarly, one can show the result for  $\E \exp\left( \lambda (\L_1^\gamma(S^H))^p \right)$ and $\E \exp\left( \lambda (\L_1^\gamma(Z^{H,K}))^p \right)$.  We complete the proof. \hfill
\end{proof}

\begin{remark} 
Analogous to the result on the local times of Riemann-Liouville process and fractional Brownian motion in \cite[Theorem 2.5]{clrs}, the large deviation results in Proposition \ref{prop-3.2} and Theorems \ref{thm-fbm} and \ref{thm-sfbm} can be applied to  argue the law of the iterated logarithm. Due to the consideration of the length of this article, the law of the iterated logarithm  will be discussed in a separate work.
\end{remark}
%

\end{document}